\def\##1\#{\begin{align}#1\end{align}}
\def\$#1\${\begin{align*}#1\end{align*}}
\newcommand{\Rom}[1]{\text{\uppercase\expandafter{\romannumeral #1\relax}}}
\begin{document}

\title{ \LARGE Sieve Method and Prime Gaps via Probabilistic Method}     


\author{Buxin Su\thanks{
Department of Mathematics, David Rittenhouse Lab, University of Pennsylvania.209 South 33rd Street, Philadelphia, PA 19104-6395 E-mail: \texttt{subuxin@sas.upenn.edu}. The bulk of this work was performed while BS was a student at University of Toronto.}}

\date{ }

\maketitle

\vspace{-0.25in}

\begin{abstract}
Most prime gaps results have been proven using tools from analytic or algebraic number theory in the last few centuries. In this paper, we would like to present some probabilistic way of proving many essential results. A major component of the proof is a probabilistic approach to the sieve method. In addition, we discuss their connections with Zhang and Maynard's recent work on small and large gaps in prime numbers.
\end{abstract}

\section{Introduction}
In last few centuries, most results in prime gaps was proven via tools from analytic or algebraic number theory. Recently, \cite{zhang2014bounded} proves the large gaps between ${n+1}^{th}$ and $n^{th}$ primes is bounded. That is, 
\begin{equation}
    \liminf_{n} p_{n+1} - p_n \leq 7 \times 10^7
\end{equation}
via a refinement of the work of Goldston, Pintzand, Yıldırım on the small gaps between consecutive primes. The proof strongly relies on a generalization of Bombieri-Vinogradov theorem (See \cite{fouvry1980theorem}). \cite{maynard2015small} have provide alternative approach showing that 
\begin{equation}
    \liminf_{n} p_{n+1} - p_n \leq 600
\end{equation}
Polymath groups also provided several similar results in \cite{polymath2012new}, \cite{castryck2014new} and \cite{polymath2014variants}. However, there are still many unsolved problems in number theory. Goldbach's conjuncture is one the most famous problems. So far, the most recent progress is given by \cite{Chen1973ONTR} half an century ago. 

In this article, mainly focus on Zhang's and Maynard's results, I would like to discuss about small and large gaps between primes and how Probability Theory play an important role in those researches. By showing the connection between Probability Theory and Analytic Number Theory, I would like to demonstrate that probability method is an effective candidate in the future study. In the last part of the manuscript, I would like to discusses a few specific problems that related to our main topic or probability method.

\section{Preliminary Results}
We start with some well-known theorems and preparation. In the remaining of this paper, we may denote $p_n$ by the $n^{th}$ prime number. The first theorem is one of the most fundamental results in Number theory and it can be proven by contradiction.  
\begin{theorem}[Euclidean Theorem]
There are infinitely many primes.
\end{theorem}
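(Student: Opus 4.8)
The plan is to argue by contradiction, following Euclid's classical construction. First I would suppose, for the sake of contradiction, that there are only finitely many primes, and enumerate them as $p_1, p_2, \ldots, p_k$. I would then introduce the auxiliary integer $N = p_1 p_2 \cdots p_k + 1$, which is at least $2$, and invoke the elementary fact that every integer greater than $1$ has at least one prime divisor (this sub-lemma is itself proved by strong induction, or simply by taking the least divisor of $N$ that exceeds $1$ and checking it must be prime).

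The key step is to observe that none of the $p_i$ can divide $N$: since $p_i \mid p_1 p_2 \cdots p_k$, if we also had $p_i \mid N$ then $p_i$ would divide the difference $N - p_1 p_2 \cdots p_k = 1$, which is impossible for a prime $p_i \geq 2$. Hence the prime divisor of $N$ guaranteed by the sub-lemma lies outside the list $p_1, \ldots, p_k$, contradicting the assumption that this list exhausts all primes. Therefore the set of primes cannot be finite.

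The argument carries essentially no analytic content, so there is no real obstacle; the only point meriting explicit care is stating the sub-lemma that every integer $>1$ admits a prime factor, so that the contradiction is watertight. As an aside, and in keeping with the probabilistic flavour of this paper, I would also note that the same conclusion follows from Euler's identity $\sum_{n \geq 1} n^{-1} = \prod_{p} \left(1 - p^{-1}\right)^{-1}$, where the product ranges over primes: the divergence of the harmonic series forces the right-hand product to be infinite, which is impossible if there were only finitely many factors. This second route is the one I would keep in mind for the later sections, since it generalizes naturally to density and sieve-theoretic statements.
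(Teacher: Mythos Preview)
Your proposal is correct and matches the paper's approach: the paper does not spell out a proof but explicitly indicates that the result ``can be proven by contradiction,'' which is precisely the Euclid-style argument you give. Your added remark on Euler's product is a nice bonus but goes beyond what the paper offers.
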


Now I would like to provide a slightly different version of Prime Number Theorem. The proof is based on von Mangoldt function. 
\begin{theorem}[Prime Number Theorem]
$|n : p_n \leq x| = (1+o(1))\frac{x}{logx}$
\end{theorem}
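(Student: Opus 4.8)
The plan is to deduce the stated asymptotic, i.e. $\pi(x) := |\{n : p_n \le x\}| = (1+o(1))\tfrac{x}{\log x}$, from the single asymptotic $\psi(x) \sim x$ for the second Chebyshev function $\psi(x) = \sum_{n \le x} \Lambda(n)$, where $\Lambda$ is the von Mangoldt function. First I would dispose of the two elementary reductions. Setting $\vartheta(x) = \sum_{p \le x} \log p$, one has the identity $\psi(x) - \vartheta(x) = \sum_{k \ge 2} \vartheta(x^{1/k})$, and since $\vartheta(y) = O(y)$ by the Chebyshev bound this is $O(\sqrt{x}\log x)$; hence $\psi(x) \sim x$ forces $\vartheta(x) \sim x$. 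Next, Abel (partial) summation gives
\[
\pi(x) = \frac{\vartheta(x)}{\log x} + \int_2^x \frac{\vartheta(t)}{t(\log t)^2}\, dt ,
\]
and feeding $\vartheta(t) = (1+o(1))t$ into both terms, with a routine split of the integral at $t = \sqrt{x}$, yields $\pi(x) = (1+o(1))\tfrac{x}{\log x}$. All of this is mechanical.

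The real content is $\psi(x) \sim x$. Here I would use the generating-series identity
\[
-\frac{\zeta'(s)}{\zeta(s)} = \sum_{n \ge 1} \frac{\Lambda(n)}{n^s}, \qquad \Re(s) > 1 ,
\]
so that $\psi$ is the summatory function of the coefficients of a Dirichlet series with a simple pole of residue $1$ at $s = 1$ (inherited from the simple pole of $\zeta$) and no other singularity on the closed half-plane $\Re(s) \ge 1$. That last clause is exactly the non-vanishing $\zeta(1+it) \ne 0$ for every real $t$, which I would establish from the inequality $3 + 4\cos\theta + \cos 2\theta = 2(1+\cos\theta)^2 \ge 0$: this gives $|\zeta(\sigma)^3 \zeta(\sigma+it)^4 \zeta(\sigma+2it)| \ge 1$ for $\sigma > 1$, which cannot hold if $\zeta$ had a zero at $1+it$ with $t \ne 0$, since then the product would tend to $0$ as $\sigma \to 1^{+}$ (a zero of order $\ge 4$ in the middle factor beats the order-$3$ pole produced by the first). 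With this analytic picture in place, a Tauberian theorem --- for instance Newman's contour-integral Tauberian theorem applied to $\int_1^\infty \bigl(\psi(e^u)e^{-u} - 1\bigr) e^{-(s-1)u}\, du$, or equivalently the Wiener--Ikehara theorem --- converts the behaviour of $-\zeta'/\zeta$ near and on the line $\Re(s) = 1$ into the asymptotic $\psi(x) = x + o(x)$.

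The main obstacle is precisely this Tauberian step. Non-vanishing of $\zeta$ at the single point $s = 1$ is immediate, but one needs it \emph{along the whole line} $\Re(s) = 1$, together with enough control on the growth of $\zeta$ and $\zeta'/\zeta$ there, to justify shifting a contour and to show that $\int_1^X \bigl(\psi(t) - t\bigr)t^{-2}\, dt$ converges --- from which $\psi(x) \sim x$ follows by a monotonicity argument using that $\psi$ is non-decreasing. A faithful write-up should either cite Newman's theorem as a black box or reproduce its (short) proof; every other link in the chain --- the Chebyshev reduction $\psi \leftrightarrow \vartheta$, the Abel summation $\vartheta \leftrightarrow \pi$, and the $3$-$4$-$1$ trick --- is standard and occupies only a few lines.
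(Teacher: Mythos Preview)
Your argument is correct and is exactly the standard route via the von Mangoldt function: reduce $\pi(x)\sim x/\log x$ to $\vartheta(x)\sim x$ by Abel summation, reduce that to $\psi(x)\sim x$ by the trivial estimate on prime powers, and then extract $\psi(x)\sim x$ from the analytic behaviour of $-\zeta'/\zeta$ on $\Re(s)\ge 1$ using the $3$--$4$--$1$ non-vanishing argument together with a Tauberian theorem (Newman or Wiener--Ikehara). Each step is sound as you describe it.

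As for comparison with the paper: the paper does not actually supply a proof of this theorem. It only records the statement together with the one-line remark that ``the proof is based on von Mangoldt function.'' Your proposal is a faithful fleshing-out of precisely that hint, so there is no divergence in approach to discuss---you have simply written out what the paper gestures at.
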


Now, we would like to represent the following elegant theorem regarding the least prime gaps $p_{n+1} - p_n$. It is one of the main theorem in \cite{goldston2009primes}. 
\begin{theorem}[\cite{goldston2009primes}]
$$\liminf_{n \to \infty} {\frac{p_{n+1}-p_n}{log\ p_n}} = 0$$
\end{theorem}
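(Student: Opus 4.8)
The plan is to deduce the theorem from the Goldston--Pintz--Y{\i}ld{\i}r{\i}m circle of ideas, organised so that the decisive step is an elementary first--moment (``averaging'') argument of a probabilistic flavour. It suffices to show that for every $\varepsilon>0$ there are infinitely many $n$ with $p_{n+1}-p_n<\varepsilon\log p_n$. Fix $\varepsilon>0$ and an integer parameter $k$, and choose an admissible $k$-tuple $\mathcal H=\{h_1<\cdots<h_k\}\subseteq[0,H]$ --- admissible meaning that for every prime $q$ the residues $h_1,\dots,h_k\bmod q$ do not exhaust $\mathbb Z/q\mathbb Z$; taking $\mathcal H$ to be $k$ consecutive primes all larger than $k$ gives such a tuple with $H=O(k\log k)$. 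Once $N$ is large we have $H<\varepsilon\log N$, so it is enough to exhibit, for arbitrarily large $N$, an integer $n\in(N,2N]$ such that at least two of $n+h_1,\dots,n+h_k$ are prime: those two primes lie in an interval of length $H<\varepsilon\log N\le\varepsilon\log p_m$, so some pair of consecutive primes has gap below $\varepsilon\log p_m$, and letting $\varepsilon\to0$ yields the theorem.

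To produce such an $n$ I would run a weighted count over $(N,2N]$. Set $P_{\mathcal H}(n)=\prod_{i=1}^{k}(n+h_i)$, fix $R=N^{\theta/2-\delta}$ where $\theta=\tfrac12$ is the level of distribution supplied by the Bombieri--Vinogradov theorem, fix a second integer parameter $\ell$, and define the Goldston--Pintz--Y{\i}ld{\i}r{\i}m refinement of the Selberg sieve weight
\[
 w_n=\Bigg(\ \sum_{\substack{d\mid P_{\mathcal H}(n)\\ d\le R}}\mu(d)\,\big(\log(R/d)\big)^{k+\ell}\ \Bigg)^{2}\ \ge\ 0 .
\]
Let $\nu(n)=\sum_{i=1}^{k}\mathbf 1\{\,n+h_i\text{ is prime}\,\}$ and regard it as a random variable under the probability measure $n\mapsto w_n\big/\sum_{m}w_m$ on $(N,2N]$. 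Since $\nu$ is nonnegative and integer valued, the existence of an $n$ with $\nu(n)\ge2$ is guaranteed once $S_2>S_1$, where $S_1=\sum_{N<n\le 2N}w_n$ and $S_2=\sum_{N<n\le 2N}\nu(n)\,w_n$ --- i.e. once the weighted expectation of $\nu$ exceeds $1$.

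The analytic heart is the asymptotic evaluation of $S_1$ and $S_2$. For $S_1$ one expands the square, interchanges summations, and diagonalises the resulting quadratic form in divisors in the classical Selberg manner; the local factors assemble into the singular series $\mathfrak S(\mathcal H)$ (nonzero exactly because $\mathcal H$ is admissible), and a contour--integral evaluation of the remaining sum over squarefree moduli gives $S_1=\big(C_1(k,\ell)+o(1)\big)\,\mathfrak S(\mathcal H)\,N\,(\log R)^{k+2\ell}$, with $C_1(k,\ell)$ an explicit ratio of factorials and binomial coefficients. For $S_2$ the same manipulation, carried out for each index $i$, reduces matters to counting primes $n+h_i\le 3N$ in arithmetic progressions to moduli up to $R^{2}<N^{\theta-2\delta}$; here the Bombieri--Vinogradov theorem absorbs the error and leaves a main term $S_2=\big(C_2(k,\ell)+o(1)\big)\,\mathfrak S(\mathcal H)\,N\,(\log R)^{k+2\ell+1}/\log N$ with an explicit combinatorial $C_2(k,\ell)$. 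Dividing, $\mathfrak S(\mathcal H)$ cancels and $S_2/S_1\to c(k,\ell)$, a positive constant of the form $\tfrac{\theta}{2}$ times a combinatorial factor; a short optimisation over $\ell$ (the classical choice is $\ell\asymp\sqrt k$) shows that as $k\to\infty$ one can force $c(k,\ell)>1$. It is the marginal nature of this gain at level of distribution exactly $\tfrac12$ --- it genuinely needs $k\to\infty$ --- that makes the argument deliver the stated limit rather than a bounded gap.

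To conclude, let $k=k(N)$ and $\ell=\ell(N)$ tend to infinity slowly enough with $N$ that all the error terms above remain $o(1)$ (they are controlled by small powers of $k,\ell$ over $\log R$ together with the Bombieri--Vinogradov error), while keeping the diameter $H=H(k(N))=O\big(k(N)\log k(N)\big)=o(\log N)$. For such $N$ we get $S_2>S_1$, hence an $n\in(N,2N]$ with $\nu(n)\ge2$, hence two consecutive primes at distance at most $H=o(\log N)$; running this along a sequence $N\to\infty$ gives $\liminf_{n\to\infty}\frac{p_{n+1}-p_n}{\log p_n}=0$. The step I expect to be hardest is the evaluation of $S_2$: turning the Bombieri--Vinogradov estimate into a genuine main term and pushing the contour integral through with an error that is $o$ of the main term \emph{uniformly} while $k,\ell$ slowly grow, and then verifying that the combinatorial optimisation really does clear the threshold $c(k,\ell)>1$ in the limit $k\to\infty$.
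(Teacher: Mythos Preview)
The paper does not actually prove this statement: it is listed among the preliminary results and attributed to Goldston--Pintz--Y{\i}ld{\i}r{\i}m with no argument given. Section~3 later describes the GPY weight, but only as background for Zhang's bounded-gap theorem, and the paper explicitly records the dilemma that at level $b<\tfrac14$ the error is small while positivity of the main term needs $b>\tfrac14$; it does not resolve this for the $\liminf=0$ result.

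Your proposal has the right architecture, but there is a genuine gap at the decisive numerical step. You assert that optimising $\ell\asymp\sqrt{k}$ forces $c(k,\ell)>1$ at level $\theta=\tfrac12$. This is false: the GPY asymptotics give
\[
\frac{S_2}{S_1}\ \sim\ \frac{2k(2\ell+1)}{(\ell+1)(k+2\ell+1)}\cdot\frac{\log R}{\log N},
\]
and $\dfrac{2k(2\ell+1)}{(\ell+1)(k+2\ell+1)}<4$ for all integers $k\ge1$, $\ell\ge0$, since $4(\ell+1)(k+2\ell+1)-2k(2\ell+1)=2k+8\ell^{2}+12\ell+4>0$. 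With $\log R/\log N<\theta/2=\tfrac14$ one obtains $S_2/S_1<1$ for every choice of $k,\ell$; the supremum is exactly $1$, approached only in the limit $k\to\infty$ with $\ell\asymp\sqrt{k}$. If your argument worked as written it would already put two primes in a \emph{fixed} admissible tuple from Bombieri--Vinogradov alone --- i.e.\ unconditional bounded gaps --- which is precisely what GPY could \emph{not} do and what had to wait for Zhang and Maynard. Letting $k=k(N)\to\infty$ does not rescue this step: at each $N$ you still need the strict inequality $S_2>S_1$, and it never holds.

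The missing idea in the genuine GPY proof of $\liminf=0$ is that one does not restrict the prime-counting sum to the tuple. With $H=\eta\log N$ and a fixed admissible $k$-tuple $\mathcal H\subset[1,H]$, one studies
\[
\sum_{N<n\le 2N}\Bigl(\ \sum_{1\le h_0\le H}\theta(n+h_0)\ -\ \log 3N\ \Bigr)\,w_n,
\]
so the inner sum ranges over \emph{all} shifts $h_0\le H$, not only $h_0\in\mathcal H$. Each $h_0\notin\mathcal H$ for which $\mathcal H\cup\{h_0\}$ is admissible contributes its own positive main term, and there are on the order of $H=\eta\log N$ such shifts; this extra averaging is exactly what tips the total positive for every fixed $\eta>0$, and letting $\eta\downarrow0$ gives the theorem. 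Your sketch replaces this by the strictly stronger claim ``two primes inside $\mathcal H$ itself'', which is unavailable at level $\tfrac12$.
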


\begin{definition}
For $h_1 < h_2 < \mathellipsis < h_k$, we call $H = \{h_1, h_2, \mathellipsis h_k\}$ is admissible iff for any prime $p$, exist $n$ such that $\prod_{i=1}^k {(n+h_i)}$ is coprime to $p$. Equivalently, $h_1 < h_2 < \mathellipsis <h_k$ which avoid at least 1 congruence class $mod \ p$ for any prime p. 
\end{definition}

In order to provide some intuition about admissible set, we have an interesting theorem dur to J, Engelsma. For further discussion on these results, see \cite{granville2015primes}.

\begin{theorem}[J. Engelsma]
There is an admissible set of size 105 contained in $[0,600]$.
\end{theorem}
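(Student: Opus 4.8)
The plan is to turn the assertion into a finite computation and then to exhibit an explicit certificate. First I would note that for a set $H$ with $|H| = k = 105$, admissibility has to be checked only for primes $p \le k$: when $p > 105$ the $105$ reductions $h_i \bmod p$ occupy at most $105 < p$ residue classes, so some class mod $p$ is automatically avoided. Consequently $H$ is admissible if and only if, for every prime $p$ among the $27$ primes $2,3,5,\dots,103$, the image of $H$ in $\mathbb{Z}/p\mathbb{Z}$ misses at least one class; for a concrete finite set this is a routine check.

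Second, I would construct such an $H$ inside $[0,600]$. The textbook admissible $k$-tuple --- the first $105$ primes larger than $105$, each of which avoids the class $0 \bmod p$ for every $p \le 105$ --- does not fit, since $(105,600]$ contains only $\pi(600)-\pi(105) = 82$ primes, far fewer than $105$. A finer construction is needed. The natural framework is to forbid, for each small prime $p$, one residue class $a_p \bmod p$, and to take $H$ to consist of $105$ integers of a window $[0,N]$ lying in none of the forbidden classes, with $N$ minimal. A single fixed choice of the $a_p$ retains only a proportion $\prod_{p\le 105}(1-1/p)\approx 0.12$ of the integers, i.e. on the order of $72$ in a window of length $600$ --- short of $105$ --- so the forbidden classes must be chosen adaptively (for instance forbidding $0 \bmod p$ only for the larger $p$, so that survivors bunch near a chosen point), which is exactly what a branch-and-bound search does. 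One may assume without loss of generality that $0\in H$, that $H$ consists of even integers (forbid the odd class mod $2$), and that $H$ is lexicographically minimal among $H$ and $\{600-h:h\in H\}$; these reductions shrink the search enough to make it tractable.

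Finally, the theorem is certified by displaying one explicit $H \subseteq \{0,1,\dots,600\}$ with $|H| = 105$, together with, for each prime $p \le 103$, one residue class mod $p$ that $H$ avoids; the verification is then immediate, while producing $H$ is the substance of the result and is precisely Engelsma's computation. The main obstacle is thus computational rather than conceptual: every naive construction overshoots diameter $600$, and squeezing $105$ admissible points into a window of length $600$ requires an efficient exhaustive or heuristic search with effective pruning. (Establishing in addition that $600$ cannot be improved --- that no admissible $105$-tuple has diameter below $600$, which is why the interval is stated as $[0,600]$ --- is a larger finite search and is not needed for the statement proved here.)
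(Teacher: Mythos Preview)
The paper does not actually prove this theorem; it merely states it, attributes it to Engelsma, and refers the reader to \cite{granville2015primes} for further discussion. So there is no proof in the paper to compare your proposal against.

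That said, your proposal correctly identifies the nature of the result. The reduction to checking only primes $p \le k = 105$ is the key observation that makes admissibility a finite verification, and your arithmetic showing that the naive construction (the first $105$ primes exceeding $105$) does not fit inside $[0,600]$ is correct and explains why a genuine search is required. Your description of the certificate --- an explicit $105$-element subset of $\{0,\dots,600\}$ together with, for each of the $27$ primes $p \le 103$, one residue class mod $p$ avoided by the set --- is exactly how such a statement is verified. The substance of the theorem is Engelsma's computation producing such a tuple, which neither you nor the paper reproduce; your proposal is an accurate outline of the standard (and essentially only) route to this kind of result.
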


The following two definitions play a crucial role in Analytic Number Theory. 
\begin{definition}
Define the primorial $\mathit{P(n)}$ of n by the product of all primes $\leq n$, i.e. $\mathit{P(n)}$ = $\prod_{p \leq n} {p}$.
\end{definition}

\begin{definition}
For $(a,q)=1$,$$\pi(x;p,a)=card\{p \leq x: p = a (mod\ q\}$$
define $E_q$ to be $$E_q = \sup_{(a,q) =1} |\pi(x;p,a)- \frac{\pi(x)}{\phi(q)}|$$
we say the primes have ‘level of distribution $\theta$' if for any $A > 0$, $$\sum_{q<x^\theta} {E_q} \ll_A \frac{X}{(logX)^A}$$

Equivalently, For $0<\theta<1$, denote $EH[\theta]$:$$\sum_{q \leq x^\theta} {\sup_{a \in (Z/qZ)^*} {|\sum_{\substack{n\leq X\\ n=a(mod\ q)}} {\Lambda(n)}} - \frac{1}{\phi(q)}\sum_{n\leq X} {\Lambda(n)}}| \ll X(logX)^{-A}$$ for all $A>0$. 
\end{definition}

The following results is a major result of analytic number theory, obtained in the mid-1960s, concerning the distribution of primes in arithmetic progressions, averaged over a range of moduli. The proof details can be found in \cite{tenenbaum2015introduction} and \cite{bombieri1987big}. 

\begin{theorem}[Bombieri-Vinogradov Theorem]
The primes have level of distribution $\theta < \frac{1}{2}$.
\end{theorem}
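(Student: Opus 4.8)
\noindent\textit{Proof sketch.}\ The plan is the classical route to Bombieri--Vinogradov: reduce to Dirichlet characters, strip the small moduli with the Siegel--Walfisz theorem, decompose $\Lambda$ by Vaughan's identity with cut-offs that are mere powers of $\log x$, and control the resulting bilinear sums with the \emph{large sieve inequality} --- which is precisely the probabilistic input the title promises, being an almost-orthogonality (second moment) estimate provable by a short $L^2$-duality argument.

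By partial summation it suffices to establish the von Mangoldt form $EH[\theta]$: for $Q=x^{1/2}(\log x)^{-B}$ and every $A>0$, $\sum_{q\le Q}\max_{(a,q)=1}|\psi(x;q,a)-\psi(x)/\phi(q)|\ll_A x(\log x)^{-A}$, where $\psi(x;q,a)=\sum_{n\le x,\,n\equiv a\,(q)}\Lambda(n)$. First I would pass to characters: for $(a,q)=1$,
\[ \psi(y;q,a)-\frac{\psi(y)}{\phi(q)}=\frac{1}{\phi(q)}\sum_{\chi\neq\chi_0}\bar\chi(a)\,\psi(y,\chi)+O\big((\log qy)^2\big),\qquad \psi(y,\chi):=\sum_{n\le y}\Lambda(n)\chi(n), \]
and then induce each $\chi$ by the primitive character modulo some $d\mid q$, $d>1$, from which it arises, at a cost of $O((\log qy)^2)$ per modulus. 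Moduli $q\le(\log x)^{B_0}$ are dealt with at once by the Siegel--Walfisz theorem, which gives $\psi(y,\chi^{\ast})\ll y\exp(-c\sqrt{\log y})$ uniformly over primitive $\chi^{\ast}$ of modulus $\le(\log x)^{B_0}$ --- a saving dwarfing any power of $\log x$. After a dyadic splitting of the remaining range, it then suffices to show, for each dyadic $D$ with $(\log x)^{B_0}\le D\le x^{1/2}(\log x)^{-B}$,
\[ \sum_{d\sim D}\frac{1}{\phi(d)}\sum_{\chi\bmod d}^{\ast}\ \max_{y\le x}\bigl|\psi(y,\chi)\bigr|\ \ll_A\ \frac{x}{(\log x)^{A+2}} , \]
the $\ast$ restricting to primitive characters.

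Next I would apply Vaughan's identity to $\Lambda$ with both cut-off parameters equal to $(\log x)^{C}$, expressing $\psi(y,\chi)$, up to a negligible remainder, as $O(1)$ sums of two shapes. The \emph{Type I} sums are $\sum_{k\le(\log x)^{2C}}\gamma_k\chi(k)\sum_{\ell\le y/k}\chi(\ell)\beta(\ell)$ with $\beta(\ell)\in\{1,\log\ell\}$ and $|\gamma_k|\le d(k)\log x$; since the outer variable runs over an interval of length only $(\log x)^{2C}$, the P\'olya--Vinogradov bound $|\sum_{\ell\le z}\chi(\ell)|\ll\sqrt d\,\log d$ already gives $\max_{y\le x}|\textrm{Type I}|\ll\sqrt d\,(\log x)^{O(1)}$ for every primitive $\chi$ modulo $d$, so their total over $d\sim D$ is $\ll(\log x)^{O(1)}D^{3/2}\le x^{3/4}(\log x)^{O(1)}$, comfortably below $x(\log x)^{-A-2}$. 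The \emph{Type II} (bilinear) sums are $\sum_{m\sim M}\sum_{\ell\sim L}\mu(m)\,\alpha_\ell\,\chi(m\ell)$ with $M,L\ge(\log x)^{C}$, $ML\le x$, and $|\alpha_\ell|\le d(\ell)\log x$; here the inner $\max_{y}$ costs only a factor $(\log x)^{O(1)}$ after decoupling the constraint $m\ell\le y$ by a standard completion (Perron's formula). Writing $\phi(d)^{-1}\asymp D^{-1}d/\phi(d)$ on $d\sim D$, using Cauchy--Schwarz in $\chi$, and then the multiplicative large sieve $\sum_{d\sim D}\frac{d}{\phi(d)}\sum_{\chi\bmod d}^{\ast}\big|\sum_{n\le N}a_n\chi(n)\big|^2\le(D^2+N)\sum_n|a_n|^2$ on each of the two factors, one bounds a Type II block by $(\log x)^{O(1)}\big(D\sqrt x+x/(\log x)^{C/2}+x/D\big)$; for $(\log x)^{B_0}\le D\le x^{1/2}(\log x)^{-B}$ all three terms are $\ll x(\log x)^{-A-2}$ once $B_0,B,C$ are chosen large in terms of $A$.

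The step I expect to carry the weight --- and to be the genuine obstacle --- is this Type II estimate and the large-sieve inequality behind it: one must set up the two factors so that they collect the savings $(D^2+M)^{1/2}$ and $(D^2+L)^{1/2}$, and then verify that the leftover terms $D\sqrt x$, $x/D$ and $x/(\log x)^{C/2}$ all drop below $x(\log x)^{-A-2}$ throughout $(\log x)^{B_0}\le D\le x^{1/2}(\log x)^{-B}$, this being the one place where the exponent $\tfrac12$ is really tested. The remaining ingredients --- the character reductions, the completion trick for $\max_y$, the Siegel--Walfisz input (the only genuinely analytic tool, resting on the non-vanishing of Dirichlet $L$-functions near $s=1$), and the large sieve inequality itself --- are by now standard, and it is the large sieve that supplies the probabilistic backbone advertised in the title.
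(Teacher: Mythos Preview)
Your sketch is the classical Vaughan--large sieve proof of Bombieri--Vinogradov and is essentially correct. The paper, however, does not prove this theorem at all: it merely states the result and defers entirely to the references (Tenenbaum; Bombieri--Friedlander--Iwaniec) for the proof. So there is nothing in the paper to compare your argument against beyond noting that the textbooks cited follow, in outline, the same character-reduction\,/\,Siegel--Walfisz\,/\,combinatorial-identity\,/\,large-sieve scheme you describe.

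Two small remarks on your write-up. First, your choice of Vaughan cut-offs $U=V=(\log x)^{C}$ is a little unusual (most textbook presentations take $U,V$ to be small powers of $x$), but it is legitimate and has the pleasant side-effect of making the Type~I sums genuinely trivial via P\'olya--Vinogradov, exactly as you say; the price is that the Type~II range is essentially all of $[(\log x)^{C},\,x(\log x)^{-C}]$, and your bound $(\log x)^{O(1)}\big(D\sqrt{x}+x(\log x)^{-C/2}+x/D\big)$ handles that correctly. Second, your framing of the large sieve as the ``probabilistic input'' is a reasonable nod to the paper's theme, but the paper itself makes no such connection for this particular theorem --- its probabilistic arguments are confined to the pigeonhole-type statements in \S3 and \S5, not to Bombieri--Vinogradov.
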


Finally, we consider Dickson-Hardy-Littlewood Conjectures (see \cite{zhang2009notes}).

\begin{definition}[Dickson-Hardy-Littlewood]
We define DHL(k,j) by the following: given $1\leq j \leq k$, for all admissible k-tuple $h_1<h_2<\mathellipsis<h_k$, exist $\infty$ many $n$ such that $n+h_1<n+h_2<\mathellipsis<n+h_k$ contain more than $j$ primes. 
\end{definition}

\section{Small Gaps Between Primes}
Let X be a large number, in this section, I would like to consider the least prime gap $p_{n+1}-p_n$ in $[X,2X]$. First, we would like to present a new probabilistic proof of the following proposition by Prime Number Theorem and pigeon hole principle. 
\begin{proposition} \label{3.1}
For X large enough, exist prime gaps in $[X,2X]$ of length $\ll logX$.
\end{proposition}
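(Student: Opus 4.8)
The plan is to combine the Prime Number Theorem with a first-moment (pigeonhole) argument; no deeper arithmetic input is required. First I would count the primes in $[X,2X]$. Writing $\pi(x)$ for the number of primes up to $x$, the Prime Number Theorem gives $\pi(x) = (1+o(1))\tfrac{x}{\log x}$, and since $\log(2X) = (1+o(1))\log X$, this yields
\[
N := \pi(2X) - \pi(X) = (1+o(1))\,\frac{X}{\log X}.
\]
In particular $N \to \infty$, so for $X$ large enough there are at least two primes in $[X,2X]$; list all of them in increasing order as $X \le q_1 < q_2 < \cdots < q_N \le 2X$, so that consecutive $q_i$'s are consecutive primes.

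Next comes the probabilistic step. Start from the telescoping identity $\sum_{i=1}^{N-1}(q_{i+1}-q_i) = q_N - q_1 \le 2X - X = X$. Draw an index $I$ uniformly at random from $\{1,\dots,N-1\}$; then the gap $q_{I+1}-q_I$ is a nonnegative random variable with
\[
\mathbb{E}\bigl[q_{I+1}-q_I\bigr] = \frac{1}{N-1}\sum_{i=1}^{N-1}(q_{i+1}-q_i) = \frac{q_N - q_1}{N-1} \le \frac{X}{N-1} = (1+o(1))\,\log X.
\]
Since a nonnegative random variable cannot always exceed its mean, there is an index $i$ with $q_{i+1}-q_i \le \mathbb{E}[q_{I+1}-q_I] \le (1+o(1))\log X$. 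As $q_i,q_{i+1}\in[X,2X]$ are consecutive primes, this exhibits a prime gap in $[X,2X]$ of length $\ll \log X$, which proves the proposition.

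I expect there to be essentially no hard step here: the only points needing care are tracking the $o(1)$ through the passage $\log(2X)\to\log X$ and ensuring $N-1\ge 1$ so that the average is well defined, both handled by taking $X$ large. I would also remark that the same first-moment reasoning (now via Markov's inequality applied to $q_{I+1}-q_I$) shows a positive proportion of the gaps in $[X,2X]$ are $O(\log X)$, which foreshadows the $\liminf$ statement of Theorem above; the genuinely difficult direction — producing gaps that are a \emph{small} constant times $\log p_n$, let alone bounded — is exactly what the sieve machinery of the following sections is needed for.
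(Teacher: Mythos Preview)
Your proof is correct, but it takes a different probabilistic route from the paper's. The paper picks $n$ uniformly at random from $[X,2X]$ and, for a window length $H\sim\log X$, computes
\[
\mathbb{E}\Bigl[\#\{h\le H:\ n+h\text{ prime}\}\Bigr]=\sum_{h\le H}\mathbb{P}(n+h\text{ prime})\sim\frac{H}{\log X}>1,
\]
concluding that some window $[n,n+H]$ contains at least two primes and hence a gap $\le H$. You instead pick a random \emph{gap} (uniformly among the $N-1$ consecutive gaps in $[X,2X]$) and bound its expectation by telescoping. Both are valid first-moment/pigeonhole arguments and give the same $\ll\log X$ bound. The paper's framing, however, is the one that drives the rest of the section: replacing the uniform distribution on $n$ by a nontrivial weight $f(n)$ is exactly the GPY/Zhang/Maynard mechanism, whereas weighting a randomly chosen gap index has no obvious analogue. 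So your argument is cleaner for this proposition in isolation, but the paper's version is deliberately set up to be generalized.
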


\begin{proof}[Probabilistic Proof of Proposition \ref{3.1}]
Let constant $H \sim logX$ for X given in the proposition. Pick n uniformly random in [X,2X]. Then I have $$\mathbb{P}(n+i\ prime) \sim \frac{1}{logX}$$  for $i = \{0,1,2, \mathellipsis H\}$ by Prime Number Theorem. Add them up, I obtain $$\sum_{h \leq H} {\mathbb{P}(n + h\ prime)} > 1.$$ By Probability pigeon hole principle, I have prime gaps $\leq log\ x$, which is the same result as we obtained using Prime Number Theorem. 
\end{proof}

\begin{remark}
Although we get the same result with even longer proof, the good aspect of this method is it can be further generalized. 
\end{remark}
\begin{remark}
One way of generalize such argument is simply by the observation that there is nothing special about uniformly random among 0 to $H$. Indeed, lots of randomly picking can be applied to this argument. So, an important task would be find the optimal one. 
\end{remark}

Now, I would like to provide several results that highly related to probability theory. In particular, the first two theorems can be proven only involving probability and basic analysis. The proof detail can be found in \cite{panchenko2018introduction}.

\begin{theorem}[Hardy-Ramanujan Theorem]
let $\omega(n)$ denote the number of distinct prime factors of $n$, i.e. $$\omega(n) = card \{p\leq n: p\ is\ prime, p|n \}$$
For any sequence $a_n$ such that $a_n \to \infty$ as $n \to \infty$, the proportion of numbers $N \in \{1, 2, \mathellipsis n\}$ that satisfy $$|\omega(N) - \log\log\ n| \leq a_n*\sqrt{\log\log\ n}$$ goes to 1.
\end{theorem}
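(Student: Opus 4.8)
\emph{Proof idea.} The plan is to run the classical second-moment argument (due to Tur\'an) in probabilistic language: endow $\{1,\dots,n\}$ with the uniform measure, express $\omega$ as a sum of indicator variables, estimate its mean and variance, and conclude with Chebyshev's inequality. Let $N$ be uniform on $\{1,2,\dots,n\}$ and, for each prime $p\le n$, put $X_p=\mathbf{1}\{p\mid N\}$, so that $\omega(N)=\sum_{p\le n}X_p$ pointwise. Since $\mathbb{P}(p\mid N)=\lfloor n/p\rfloor/n=\tfrac{1}{p}+O(1/n)$, summing over $p\le n$ and using the elementary estimate $\sum_{p\le n}\tfrac{1}{p}=\log\log n+O(1)$ (Mertens) gives $\mathbb{E}\,\omega(N)=\log\log n+O(1)$; that is, $\log\log n$ is the mean of $\omega(N)$ up to a bounded error, so the statement is really a concentration estimate around the mean.

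The technical core is the variance bound $\mathrm{Var}\bigl(\omega(N)\bigr)\ll\log\log n$. I would split $\mathrm{Var}(\omega(N))=\sum_{p\le n}\mathrm{Var}(X_p)+\sum_{p\ne q}\mathrm{Cov}(X_p,X_q)$. The diagonal part is at most $\sum_{p\le n}\mathbb{E}X_p=\mathbb{E}\,\omega(N)=\log\log n+O(1)$, since each $X_p$ is $\{0,1\}$-valued. For the off-diagonal part, divisibility by two distinct primes is almost independent: $\mathbb{E}[X_pX_q]=\lfloor n/(pq)\rfloor/n\le 1/(pq)$, while $\mathbb{E}X_p\,\mathbb{E}X_q\ge(1/p-1/n)(1/q-1/n)\ge 1/(pq)-1/(pn)-1/(qn)$, so $\mathrm{Cov}(X_p,X_q)\le 1/(pn)+1/(qn)$. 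Summing over ordered pairs $p\ne q$ with $p,q\le n$ then bounds the off-diagonal part by $\tfrac{2\pi(n)}{n}\sum_{p\le n}\tfrac{1}{p}\ll\tfrac{\log\log n}{\log n}=o(1)$. Therefore $\mathrm{Var}(\omega(N))\le\log\log n+O(1)$.

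To conclude, I would apply Chebyshev's inequality. Because $\mathbb{E}\,\omega(N)=\log\log n+O(1)$ and $a_n\sqrt{\log\log n}\to\infty$, for all sufficiently large $n$ the event $\{|\omega(N)-\log\log n|\ge a_n\sqrt{\log\log n}\}$ is contained in $\{|\omega(N)-\mathbb{E}\,\omega(N)|\ge\tfrac{1}{2}a_n\sqrt{\log\log n}\}$, and the latter has probability at most $\mathrm{Var}(\omega(N))\big/\bigl(\tfrac{1}{4}a_n^{2}\log\log n\bigr)\le C/a_n^{2}\to 0$. Translating back, the number of $N\in\{1,\dots,n\}$ for which $|\omega(N)-\log\log n|\le a_n\sqrt{\log\log n}$ fails is $o(n)$, so the proportion of $N$ satisfying it tends to $1$, which is the assertion.

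The step I expect to be the real obstacle is the variance estimate, because the covariance sum runs over $\asymp\pi(n)^{2}$ pairs and one must verify that the extra $1/n$ produced by the floor functions is enough to render the whole sum $o(1)$ (it is, with room to spare). The mean computation uses nothing beyond Mertens' estimate, and the final Chebyshev step is routine. As a side remark, one could alternatively truncate at primes $p\le z$ with $z=n^{1/\sqrt{\log\log n}}$---the prime factors of $N$ exceeding $z$ then number at most $\log n/\log z=\sqrt{\log\log n}$, which is absorbed since $a_n\to\infty$---but since we only need an \emph{upper} bound on the variance, working directly with all $p\le n$ is cleaner.
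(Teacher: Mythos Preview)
Your argument is correct and is exactly the probabilistic second-moment proof (Tur\'an's simplification of Hardy--Ramanujan) that the paper has in mind: the paper does not supply its own proof but only remarks that the result ``can be proven only involving probability and basic analysis'' and defers to \cite{panchenko2018introduction}, where this same mean/variance/Chebyshev computation is carried out. One small point of friction with the paper's presentation is logical ordering: you use Mertens' estimate $\sum_{p\le n}1/p=\log\log n+O(1)$ as an \emph{input} to compute $\mathbb{E}\,\omega(N)$, whereas the paper lists Mertens' theorems \emph{after} Hardy--Ramanujan and calls them a ``direct consequence'' of it; your ordering is the standard and mathematically correct one, since Mertens' estimates are proved elementarily and independently.
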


The direct consequence of Hardy-Ramanujan Theorem would be Merten's first and second theorem.

\begin{theorem}[Merten's Theorem]
As $n\to \infty$, the following hold:
$$\sum_{p\leq n} {\frac{\log p}{p}}= \log(n) + O(1)$$
$$\sum_{p\leq n} {\frac{1}{p}} = \log(\log(n)) + O(1)$$
where p are primes $\leq n$.
\end{theorem}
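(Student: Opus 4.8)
The plan is to obtain the first estimate from Stirling's formula together with an elementary Chebyshev-type bound, and then to deduce the second estimate from the first by partial (Abel) summation; only elementary tools are needed — in fact this route is logically \emph{prior} to the Hardy--Ramanujan theorem rather than a consequence of it.

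First I would evaluate $\log(n!)=\sum_{m\le n}\log m$ in two ways. On one side, Stirling's formula gives $\log(n!)=n\log n-n+O(\log n)$. On the other side, Legendre's formula writes $n!=\prod_{p\le n}p^{v_p}$ with $v_p=\sum_{k\ge 1}\lfloor n/p^k\rfloor$, so that
$$\log(n!)=\sum_{p\le n}\Big(\sum_{k\ge 1}\Big\lfloor\frac{n}{p^k}\Big\rfloor\Big)\log p.$$
The terms with $k\ge 2$ contribute at most $n\sum_{p}\frac{\log p}{p(p-1)}=O(n)$, and replacing each $\lfloor n/p\rfloor$ by $n/p$ costs an error $O\big(\sum_{p\le n}\log p\big)$. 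Here I would bring in the Chebyshev bound $\theta(n):=\sum_{p\le n}\log p=O(n)$, which itself follows from $\prod_{m<p\le 2m}p\le\binom{2m}{m}\le 4^m$ and a telescoping argument. Combining these identities and error estimates gives $\sum_{p\le n}\frac{n\log p}{p}=n\log n+O(n)$, and dividing by $n$ yields the first claim, $\sum_{p\le n}\frac{\log p}{p}=\log n+O(1)$.

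Next, writing $A(t):=\sum_{p\le t}\frac{\log p}{p}=\log t+R(t)$ with $R(t)$ bounded (by the step just proved) and using $\frac1p=\frac{\log p}{p}\cdot\frac{1}{\log p}$, Abel summation gives
$$\sum_{p\le n}\frac1p=\frac{A(n)}{\log n}+\int_{2}^{n}\frac{A(t)}{t(\log t)^2}\,dt.$$
The boundary term equals $1+O(1/\log n)$. Substituting $A(t)=\log t+R(t)$ splits the integral into $\int_2^n\frac{dt}{t\log t}=\log\log n+O(1)$ plus $\int_2^n\frac{R(t)}{t(\log t)^2}\,dt$, and the latter converges as $n\to\infty$ since $R$ is bounded and $\int^{\infty}\frac{dt}{t(\log t)^2}<\infty$. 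Hence $\sum_{p\le n}\frac1p=\log\log n+O(1)$, the second claim.

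I expect the only real obstacle to lie in the first step: one has to be honest about discarding the higher prime powers, about the error in passing from $\lfloor n/p\rfloor$ to $n/p$, and above all about the Chebyshev upper bound $\theta(n)=O(n)$ — without a genuinely linear bound on $\theta(n)$ the two error terms are not under control and the argument collapses. Once that bound is in hand the second step is a routine partial summation with an absolutely convergent tail integral, so I anticipate no further difficulty there.
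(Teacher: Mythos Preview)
Your argument is correct and is essentially the classical Chebyshev--Mertens route: evaluate $\log(n!)$ via Stirling and via Legendre, use the elementary bound $\theta(n)=O(n)$ to control the two error terms, divide by $n$ to get the first estimate, and then run Abel summation to obtain the second. All the steps are sound, and your identification of the Chebyshev bound as the one genuine input is exactly right.

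The paper, by contrast, gives no proof at all: it simply asserts that Mertens' estimates are a ``direct consequence of Hardy--Ramanujan'' and defers to the reference \cite{panchenko2018introduction}. As you already flag in your proposal, this inverts the usual logical order: in the standard probabilistic (Tur\'an) proof of Hardy--Ramanujan one \emph{uses} the second Mertens estimate to compute the mean $\mathbb{E}\,\omega(N)\sim\log\log n$, so treating Mertens as a corollary risks circularity. One could in principle try to recover the average order of $\omega$ from its normal order and then read off $\sum_{p\le n}1/p$, but that requires a quantitative bound on the exceptional set together with the trivial pointwise bound $\omega(N)\ll\log N$, and even then it yields only the second estimate and not the first. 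Your self-contained elementary approach avoids all of this and is both cleaner and logically prior.
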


The following theorem is another generalization of Hardy-Ramanujan Theorem, which is known as Erdős–Kac theorem, or fundamental theorem of probabilistic number theory. It was first proven in \cite{erdos1940gaussian}. As we may notice, the format is very similar to Central Limit Theorem. 

\begin{theorem}[Erdős–Kac theorem]
For $\omega(n)$ we define above and fixed $a<b$, I have following $$\lim_{x \to \infty} {\frac{1}{x}*card\{n \leq x: a\leq \frac{\omega(n) - \log\log\ n}{\sqrt{\log\log\ n}} \leq b\}} = \Phi(a,b)$$
where $\Phi(a,b)$ is the normal distribution.
\end{theorem}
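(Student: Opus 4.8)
The plan is to prove the Erdős–Kac theorem by the method of moments, comparing $\omega(n)$ -- with $n$ drawn uniformly from $\{1,\dots,x\}$ -- to a sum of genuinely independent Bernoulli variables. Write $\omega(n)=\sum_{p}\mathbf{1}\{p\mid n\}$ and fix the truncation level $y=x^{1/\log\log x}$, with $\omega_y(n):=\sum_{p\le y}\mathbf{1}\{p\mid n\}$. The first step is to discard large prime factors. By Mertens' theorem, $\sum_{y<p\le x}\tfrac1p=\log\log x-\log\log y+O(1)=\log\log\log x+O(1)$, so the average of $\omega(n)-\omega_y(n)$ over $n\le x$ is at most $\sum_{y<p\le x}\tfrac1p=O(\log\log\log x)=o(\sqrt{\log\log x})$; by Markov's inequality, $(\omega(n)-\omega_y(n))/\sqrt{\log\log x}\to0$ in probability, so $\omega$ and $\omega_y$ share the same normalized limiting law. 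Mertens also gives $\mu:=\sum_{p\le y}\tfrac1p=\log\log x+O(\log\log\log x)$ and $\sigma^2:=\sum_{p\le y}\tfrac1p(1-\tfrac1p)=\log\log x+O(\log\log\log x)$, so by Slutsky's lemma it suffices to show that $(\omega_y(n)-\mu)/\sigma$ converges in distribution to a standard normal.

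Next I would introduce independent Bernoulli variables $\{B_p\}_{p\le y}$ with $\mathbb{P}(B_p=1)=1/p$, and set $S_y=\sum_{p\le y}B_p$. This model satisfies a Lyapunov central limit theorem -- $\sum_{p\le y}\mathbb{E}\,|B_p-\tfrac1p|^3\le\sum_{p\le y}\tfrac1p\asymp\sigma^2$, so the Lyapunov ratio is $\ll 1/\sigma\to0$ -- hence $(S_y-\mu)/\sigma\Rightarrow N(0,1)$, and the standardized moments of $S_y$ converge to those of $N(0,1)$ by the classical combinatorial moment formula for sums of independent bounded variables. Since the normal law is determined by its moments (Carleman's condition), it is enough to match the arithmetic moments to this model: for every fixed $k\ge1$,
$$\frac1x\sum_{n\le x}\left(\frac{\omega_y(n)-\mu}{\sigma}\right)^{k}=\mathbb{E}\left[\left(\frac{S_y-\mu}{\sigma}\right)^{k}\right]+o(1).$$
Expanding the $k$-th power reduces both sides to fixed linear combinations of joint moments indexed by multisets $T$ of primes $\le y$ with at most $k$ distinct elements: on the arithmetic side $\frac1x\sum_{n\le x}\prod_{p\in T}\mathbf{1}\{p\mid n\}=\frac1x\lfloor x/\prod_{p\in T}p\rfloor=\prod_{p\in T}\tfrac1p+O(1/x)$, and on the independent side $\mathbb{E}\prod_{p\in T}B_p=\prod_{p\in T}\tfrac1p$ exactly. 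The main terms agree, and since $\prod_{p\in T}p\le y^{k}=x^{o(1)}$ while the number of such multisets is $\le\pi(y)^{k}=x^{o(1)}$, the total error is $x^{o(1)}/x=o(1)$. Thus the arithmetic moments converge to the Gaussian moments, which by the method of moments yields $(\omega_y(n)-\mu)/\sigma\Rightarrow N(0,1)$; combined with the first paragraph, this identifies $\Phi(a,b)=\frac1{\sqrt{2\pi}}\int_a^b e^{-t^2/2}\,dt$, as claimed.

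The hard part is the bookkeeping in this moment-matching step: one must confirm that the errors $O(1/x)$, once summed over all multisets of primes below $y$ of size at most $k$ and divided by $\sigma^k\asymp(\log\log x)^{k/2}$, still tend to $0$ -- this is precisely why the cutoff $y$ must be only a slowly growing power $x^{o(1)}$ of $x$, and why the earlier truncation of the primes between $y$ and $x$ is indispensable. A secondary, purely technical point is keeping the centering $\mu$ and scaling $\sigma$ consistent across the $\omega_y$, $\omega$, and $S_y$ versions; this is absorbed into Mertens' theorem and Slutsky's lemma exactly as in the first paragraph, and contributes nothing to the limit.
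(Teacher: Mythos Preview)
The paper does not actually prove the Erd\H{o}s--Kac theorem; it merely states the result and cites the original 1940 paper of Erd\H{o}s and Kac, remarking in passing (in the Discussion) that the Central Limit Theorem plays an important role. There is therefore no in-paper proof to compare your proposal against.

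Your proposal itself is the standard method-of-moments proof (in the spirit of Halberstam and Billingsley) and is correct as outlined. The truncation at $y=x^{1/\log\log x}$ is well chosen, the replacement of $\omega$ by $\omega_y$ via Markov and Mertens is clean, and the moment comparison between $\omega_y(n)$ under uniform sampling and the independent Bernoulli sum $S_y$ is handled correctly: for each fixed $k$ the accumulated error is $O(\pi(y)^k/x)=x^{o(1)}/x=o(1)$ after dividing by $\sigma^k$, exactly because $y^k=x^{o(1)}$. The appeal to moment-determinacy of the normal law via Carleman is the right way to close the argument.

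For context, the proof the paper cites---the original Erd\H{o}s--Kac argument---takes a genuinely different route: it uses the Brun sieve to show directly that, for $n\le x$, the joint distribution of the indicators $\mathbf{1}\{p\mid n\}$ for $p\le y$ agrees with that of the independent $B_p$'s up to negligible error (on the level of distribution functions, not moments), and then applies the Lindeberg CLT to $S_y$. Your moment approach is more elementary in that it avoids sieve theory altogether; what it loses is the directness of matching distributions, trading it for the bookkeeping you flag in your final paragraph.
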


Our goal now is to show the following theorem from \cite{zhang2014bounded}. The results has been further generalized in \cite{polymath2014bounded}. Zhang's result actually can be seen as a generalization of the method we discuss above.
\begin{theorem}
Let $H = \{h_1, h_2, \mathellipsis h_k\}$ is admissible and $k$ sufficient large, then exist infinite many $n$ such that at least 2 of $\{n+h_1, n+h_2, \mathellipsis n+h_k\}$ are primes. 
\end{theorem}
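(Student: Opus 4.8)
The plan is to follow the multidimensional sieve of Maynard and Tao, which deduces a statement of exactly this kind --- infinitely many $n$ with at least two primes among the $n+h_i$ --- from the Bombieri--Vinogradov theorem alone, with no Elliott--Halberstam-type input. Fix an admissible $k$-tuple $H=\{h_1,\dots,h_k\}$ and a large parameter $X$. It suffices to produce a single $n\in(X,2X]$ for which at least two of $n+h_1,\dots,n+h_k$ are prime, since letting $X\to\infty$ then yields infinitely many such $n$. Following the probabilistic spirit of Proposition \ref{3.1}, I would introduce nonnegative weights $w_n\ge0$, think of them as defining a probability measure on $(X,2X]$ with mass proportional to $w_n$, and consider
$$
S \;=\; \sum_{X<n\le 2X}\Bigl(\,\sum_{i=1}^{k}\mathbf{1}_{n+h_i\,\mathrm{prime}}\;-\;1\,\Bigr)\,w_n .
$$
Then $S>0$ says exactly that, under this measure, the expected number of primes among $n+h_1,\dots,n+h_k$ exceeds $1$, so by the probabilistic pigeonhole principle there is positive probability of seeing at least two primes --- in particular some admissible $n$ does. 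The whole problem is thus to design weights making $S>0$.

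For the weights I would take the Maynard--Tao form
$$
w_n \;=\; \Bigl(\ \sum_{\substack{d_i\mid n+h_i\ (1\le i\le k)\\ d_1\cdots d_k<R}} \lambda_{d_1,\dots,d_k}\ \Bigr)^{2},
\qquad R = X^{\theta/2-\varepsilon},
$$
where $\theta<\tfrac12$ is the level of distribution furnished by the Bombieri--Vinogradov theorem and the $\lambda$'s are supported on squarefree $k$-tuples with $\bigl(d_i,\prod_{j}(h_i-h_j)\bigr)=1$. Write $S_1=\sum_n w_n$ and $S_2=\sum_n\bigl(\sum_i\mathbf{1}_{n+h_i\,\mathrm{prime}}\bigr)w_n$, so $S=S_2-S_1$. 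Expanding each square and interchanging the order of summation turns $S_1,S_2$ into quadratic forms in the $\lambda$'s: the diagonal contribution to $S_1$ is evaluated by the usual lattice-point (hyperbola) count, and for $S_2$ one first fixes which coordinate $n+h_{i_0}$ carries the prime, then counts primes in the progression modulo $d_1\cdots d_k$. The key point --- and the link to the stated theorem --- is that since $R^2<X^{\theta}$, the error terms accumulated in the $S_2$ computation are precisely of the shape that Bombieri--Vinogradov controls on average over moduli. After substituting $\lambda_{d_1,\dots,d_k}\approx\mu(d_1)\cdots\mu(d_k)\,F\bigl(\tfrac{\log d_1}{\log R},\dots,\tfrac{\log d_k}{\log R}\bigr)$ for a fixed smooth $F$ supported on the simplex $\Delta_k=\{t_i\ge0,\ \sum_i t_i\le1\}$, one obtains, with $\mathcal{S}(H)>0$ the (admissible) singular series and absolute constants $c_k,c_k'$,
$$
S_1 \;\sim\; c_k\,\mathcal{S}(H)\,X\,(\log R)^{k}\,I_k(F),
\qquad
S_2 \;\sim\; c_k'\,\mathcal{S}(H)\,\frac{X}{\log X}\,(\log R)^{k+1}\sum_{m=1}^{k}J_k^{(m)}(F),
$$
where $I_k(F)=\int_{\Delta_k}F^2$ and $J_k^{(m)}(F)=\int_{\Delta_{k-1}}\bigl(\int_0^{1}F\,dt_m\bigr)^2$. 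Hence $S>0$ whenever
$$
\frac{\theta}{2}\cdot\frac{\sum_{m=1}^{k}J_k^{(m)}(F)}{I_k(F)} \;>\; 1 .
$$

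It remains to show that the variational quantity $M_k:=\sup_{F}\dfrac{\sum_{m=1}^{k}J_k^{(m)}(F)}{I_k(F)}$, the supremum over admissible nonzero $F$, can be pushed past $2/\theta$ once $k$ is large --- and this is where the real work and the main obstacle lie. The standard route is to exhibit an explicit near-optimal test function, e.g.\ $F(t_1,\dots,t_k)=\prod_{i=1}^{k}g(kt_i)$ on a slightly contracted simplex for a suitably chosen fixed profile $g$, or the symmetrized polynomial families of Maynard, and then estimate the resulting one-dimensional integrals directly to get $M_k\gg\log k$. Since $\theta$ is a fixed constant below $\tfrac12$, one simply chooses $k$ large enough that $\log k$ exceeds the relevant multiple of $1/\theta$, so $M_k>2/\theta$; then some admissible $F$ --- hence a genuine choice of sieve weights $w_n\ge0$ --- makes $S>0$, which proves the theorem. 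In short, the two substantive steps are (a) the asymptotic evaluation of $S_1$ and $S_2$ with the off-diagonal error absorbed via Bombieri--Vinogradov, which is long but routine, and (b) the lower bound $M_k\gg\log k$ for the variational problem, which carries the actual idea; everything else is bookkeeping about the coprimality conditions and the singular series.
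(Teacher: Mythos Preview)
Your argument is correct, but it follows Maynard--Tao rather than the approach the paper actually uses for this theorem. The paper proves this statement via Zhang's method: it takes the \emph{one-dimensional} GPY weight $f(n)=\bigl(\sum_{d\le x^b,\ d\mid\prod_i(n+h_i)}\lambda_d\bigr)^2$, reduces the positivity of $\sum f(n)(\sum_i 1_{n+h_i\ \mathrm{prime}}-1)$ to controlling a main term $T$ (which needs $b>\tfrac14$) and an error $E$ (which Bombieri--Vinogradov only handles for $b<\tfrac14$), and then resolves this dilemma by Zhang's breakthrough that $E$ is still small at $b=\tfrac14+\tfrac{1}{1168}$ --- an argument resting on a smoothness restriction on the moduli, a combinatorial decomposition into three convolution types, and ultimately Weil's Kloosterman bound and Deligne's work. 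Your route sidesteps all of this: by passing to the multidimensional weights $\lambda_{d_1,\dots,d_k}$ you never need to push past the Bombieri--Vinogradov barrier, and the real content shifts to the variational lower bound $M_k\gg\log k$. What Zhang's approach buys is an independently valuable equidistribution result for primes in progressions beyond $x^{1/2}$; what your approach buys is a far more elementary input (only Bombieri--Vinogradov), a cleaner optimization problem, and the immediate extension to $\liminf(p_{n+m}-p_n)<\infty$ for every $m$. The paper does present the Maynard argument too, but only afterwards and for the stronger Theorem~\ref{3.17}, not as its proof of the statement in question.
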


\begin{remark}
What we really would like to show is for large enough X, exist $n \in [X,2X]$ such that at least 2 of $\{n+h_1, n+h_2, \mathellipsis n+h_k\}$ are primes.
\end{remark}

\begin{remark}
There are two ways to think about this problem. The first one is to generalize the argument I used in the proposition and conclude using pigeon hole principle in Probability. Alternately, we observe that it is sufficient to show for some $n \in [X,2X]$, I have$$\sum_{i=1}^k {1_{n+h_i \ prime}} \geq 2$$ which is equivalent to $$\sum_{i=1}^k {1_{n+h_i \ prime}} - 1 > 0$$ It turn out two ways of thinking have the same consequence as following. 
\end{remark}

our new goal is to find optimal $f(n) > 0$ for $n \in [X,2X]$, where $f(n)$ can be viewed as probability distribution or weight. Also, in order to conclude the claim, we also require $f(n)$ to satisfy $$\sum_{X<n<2X} {f(n)*\Bigg(\sum_{i=1}^k {1_{n+h_i \ prime}}-1 \Bigg)} > 0$$ Once we have this, then at least one of n such that $$\sum_{i=1}^k {1_{n+h_i \ prime}}-1 > 0$$ since the right hand side take value in integer, this gives us exactly what we want. 

Fortunately, we already had a smart choice of $f(n)$ by \cite{goldston2009primes}, we pick $f(n)$ as following: $$f(n)=\Bigg(\sum_{\substack{d\leq x^b\\ d|(n+h_1)(n+h_2) \mathellipsis (n+h_k)}} {\frac{1}{(k+l)!}*\mu(d)*(log\frac{x^b}{d})^{k+l}} \bigg)^2$$ where $l>0$ depends on $k$ and $b$ is a constant in $(0,\frac{1}{2})$

\begin{remark}
Here, we denote the element inside the sum by $\lambda_d$. In fact such choice of weight is found by Goldston, D. A., Pintz, J. and Yildirim (GPY). Basically, the motivation of this choice is $\sum_{d|n} {\mu(d)*(log\frac{n}{d})^k}$ would vanish when $f(n) > k$.
\end{remark}

\begin{remark}
In Zhang's paper, instead of using indicator function, he tends to use $\theta$, which defined to be $\theta(n)=log(n)$ for n is prime and 0 otherwise.However, in the end this makes no difference. 
\end{remark}

Now, we would like check our definition of $f(n)$ does satisfy the requirement above. We obtain the following:

\begin{equation}
    \begin{aligned}
        &\sum_{X<n<2X} {f(n)* \Bigg(\sum_{i=1}^k {1_{n+h_i \ prime}}-1\Bigg)}\\ &=\sum_{X<n<2X} {f(n)* \Bigg(\sum_{i=1}^k {1_{n+h_i \ prime}} \Bigg)} - \sum_{X<n<2X} {f(n)}
    \end{aligned}
\end{equation}

Now, our task become computing two part respectively. It turn out that if square our all the term and rearrange, we have $$\sum_{X<n<2X} {f(n)} = \sum_{d_1,d_2<x^b}{\lambda_{d_1} \lambda_{d_2} *\Bigg(\sum_{\substack{X \leq n<2X\\ gcd(d_1,d_2)|(n+h_1)(n+h_2) \mathellipsis (n+h_k)}} {1}\Bigg)}$$
Adding the characteristic function, I have
$$\sum_{X<n<2X} {f(n)1_{n+h_i \ prime}} = \sum_{{d_1,d_2}<x^b}{\lambda_{d_1} \lambda_{d_2} *\Bigg(\sum_{\substack{X<n<2X\\ gcd(d_1,d_2)|(n+h_1)(n+h_2) \mathellipsis (n+h_k)}} {1_{n+h_i \ prime}}\Bigg)}$$

\begin{remark}
In fact, before we explicitly compute the result above, roughly speaking, second term in the right hand side count how many primes are there in arithmetic progression. So, we have to obtain some results about number of primes in arithmetic progression to control this. Dirichlet theorem is clearly one of the results, but it is too weak to estimate the right hand side. 
\end{remark}

Subtract these two expression above and compute, in Zhang's notation, we define $R(x;d,c)$ to be $$R(x;d,c)=\sum_{\substack{X \leq n<2X\\ n=c(mod\ d)}} {\Lambda(n)} - \frac{X}{\phi(d)}$$ where $\Lambda(n)$ is the von Mangoldt function. Then, E is define to be $$E = \sum_{d<x^{2b}}{|\mu(d)|\sum_{c \in C_i(d)} {|R(x;d,c)|}}$$ where the $C_i(d)$ is defined as $$C_i(d) = \Bigg\{c:1 \leq c \leq d, (c,d) =1, \prod_{j=1}^k {c-h_i+h_j}=0(mod\ d) \Bigg\}$$ Then, the final computation would gives us 
$$\sum_{X \leq n<2X} {f(n)*\Bigg(\sum_{i=1}^k {1_{n+h_i \ prime}}-1\Bigg)} = T*X+O(E)$$ for some expression T and E defined above.

Now, our main tasks become the control of E and T. In order to make the left hand side positive, we only need to have (i)$T > 0 $; (ii) $|E|$ is small enough. By small enough, I would like to show $$E \ll X(logX)^{-A}$$ for large constant A.

Regarding two requirement above, we have some results from \cite{goldston2009primes} and Bombieri-Vinogradov theorem:

\begin{enumerate}
    \item if b $>$ $\frac{1}{4}$, say $c = \frac{1}{4} + \epsilon$, then $T>0$, (i) hold.
    \item if $b < \frac{1}{4}$, $|E|$ is relatively small, (ii) hole. 
\end{enumerate}

\begin{remark}
Eventually, we want to obtain a better choice of b in our second point, i.e. we want to find some $b > \frac{1}{4}$ such that $|E|$ is still relatively small. However, even if we assume Generalized Riemann Hypothesis (GRH), the dilemma above still does not being solved. GRH would tell us that for any R, this error E is smaller up to the form $\sqrt{X}$. If $b>\frac{1}{4}$, than we have at least $\sqrt{x}$ terms in the sum of E with each of them have upper bound $\sqrt{X}$. This is still not efficient bound of E.
\end{remark}

Zhang's main contribution is focused on showing that we are able to efficiently bound $E$. It turn our that $E$ is relatively small, i.e. (ii) hold, for $b=\frac{1}{4}+\frac{1}{1168}$. Traditionally, the estimation of term $R$ in $E$ involve zero of Dirichlet L-function. However, in Zhang's paper, Dirichlet L-function only appear potentially for a few times.

In order to estimate the error term, we first impose some constrain to $d$ that $d$ is divide $P$ where $P$ is the product of primes less than a small power of $X$. This is equivalent to saying that every prime factors of $d$ is less than a small power of $X$. As the result of this constrain, $d$ is not small but every factor of d is small. Having such constrain, we reduce to estimate the sum of $|\Delta(\gamma,d,c)|$ where $\gamma$ is supported on $[X,2X)$ and $\Delta$ is defined to be $$\Delta(\gamma,d,c) = \sum_{n = c (mod\ d)} {\gamma(n)} - \frac{1}{\phi(d)}*\sum_{(n,d)=1} {\gamma(n)}$$.

If $d|P$ and $d$ is not too small, say $d > x^{\frac{1}{2}-\epsilon}$, then d has factorization $d=qr$ where the range of r can be very flexibly chosen. Applying combinatorial argument, we reduce the estimation of $|\Delta(\gamma,d,c)|$ to three types of Dirichlet convolution.

\begin{enumerate}
    \item first two type: $\gamma = \alpha * \beta$ the Dirichlet convolution of two function, where $\beta$ is supported on $[N,2N)$. When $N>x^{\frac{3}{8}}$, we can reduce to estimate the sum of $$\sum_{Q<q<2Q} {\sum_{R<r<2R} {|\mu(qr)|*\sum_{c \in C_i (qr)} {\Delta(\gamma,qr,c)}}}$$ If we choose $R$ to be slightly smaller than $N$, then we are able to conclude using Weil’s bound for Kloosterman sums.
    \item third type: $\gamma = \alpha * \beta_1 *\beta_2 *\beta_3$ Note in Zhang's paper, instead of using $\beta_i$, he uses $\mathcal{X}_i$ to emphasis that these functions are characteristic function of certain intervals. Applying $d=qr$ to this type, we can get an efficient upper bound using Deligne’s proof of the Riemann Hypothesis for varieties over finite fields.
\end{enumerate}

After the effort above, we are able to get an efficient bound and this would end Zhang's argument. Different from Zhang's method, the method about choosing $f(n)$ is further generalized by J. Maynard in \cite{maynard2015small}.

Instead of using $f(n)$ as above, Maynard generalize to "multi variable" case, which allows to conclude stronger statement. 

\begin{theorem}[\cite{maynard2015small}]
The small gaps between primes are given by
$$\liminf_{n} {p_{n+m}-p_n} \leq Cm^3*e^{4m+5}$$
for all $m \in \mathbb{N}$ and morover,
$$\liminf_{n} {p_{n+1}-p_n} \leq 600$$
\end{theorem}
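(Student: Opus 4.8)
The plan is to prove both bounds through the Maynard--Tao multidimensional sieve, which refines the GPY weight $f(n)$ displayed above by replacing the single divisor variable $d$ with a $k$-tuple $(d_1,\dots,d_k)$. Fix an admissible $k$-tuple $\mathcal H=\{h_1,\dots,h_k\}$. As in Zhang's argument one first performs the ``$W$-trick'': set $W=\prod_{p\le D_0}p$ with $D_0\to\infty$ very slowly (e.g.\ $D_0=\log\log\log N$), choose a residue $v_0 \pmod W$ with $(v_0+h_i,W)=1$ for every $i$ (possible by admissibility), and restrict attention to $n\equiv v_0\pmod W$ in $[N,2N)$. One then attaches the weights
\[
w_n=\Bigg(\sum_{\substack{d_i\mid n+h_i\ \forall i\\ d_1\cdots d_k\le R}}\lambda_{d_1,\dots,d_k}\Bigg)^{\!2},
\]
where $R=N^{\theta/2-\varepsilon}$ and the $\lambda_{d_1,\dots,d_k}$ are obtained, by a change of variables, from a fixed smooth function $F\colon[0,1]^k\to\mathbb R$ supported on the simplex $\{t_1+\dots+t_k\le 1\}$. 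Exactly as in the heuristic above it suffices to exhibit $F$ for which
\[
S:=\sum_{\substack{N\le n<2N\\ n\equiv v_0(W)}} w_n\Bigg(\sum_{i=1}^{k}\mathbf 1_{n+h_i\ \mathrm{prime}}-m\Bigg)>0,
\]
since then some $n$ in the range has at least $m+1$ of the $n+h_i$ prime, whence $p_{r+m}-p_r\le h_k-h_1$ for the corresponding $r$, for infinitely many $r$.

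Next I would evaluate $S=S_2-mS_1$ asymptotically. The main term $S_1=\sum w_n$ is handled by expanding the square, summing over $n$ in the residue classes, and diagonalising the resulting quadratic form in the $\lambda$'s; this is elementary manipulation and yields
\[
S_1=\bigl(1+o(1)\bigr)\,\frac{\varphi(W)^k N}{W^{k+1}}(\log R)^k\, I_k(F),\qquad I_k(F)=\int_{[0,1]^k}F^2.
\]
For $S_2=\sum_{i}\sum_n w_n\mathbf 1_{n+h_i\ \mathrm{prime}}$ the inner sum counts primes in arithmetic progressions to moduli of size up to $R^2\le N^{\theta-2\varepsilon}$, and here the Bombieri--Vinogradov theorem (level of distribution $\theta<\tfrac12$) controls the error uniformly on average, giving
\[
S_2=\bigl(1+o(1)\bigr)\,\frac{\varphi(W)^k N}{W^{k+1}}\,\frac{(\log R)^{k+1}}{\log N}\sum_{j=1}^{k}J_k^{(j)}(F),
\]
with $J_k^{(j)}(F)=\int_{[0,1]^{k-1}}\big(\int_0^1 F\,dt_j\big)^2$. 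Since $\log R/\log N\to\theta/2$, dividing through by the common positive factor shows $S>0$ is implied by $M_k:=\sup_{F}\bigl(\sum_{j=1}^k J_k^{(j)}(F)/I_k(F)\bigr)>2m/\theta$, and as $\theta$ may be taken arbitrarily close to $\tfrac12$ it suffices to produce an $F$ (depending on $k$) with $\sum_j J_k^{(j)}(F)/I_k(F)>4m$ strictly.

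It remains to carry out this variational problem, which splits into the two regimes of the theorem. For the quantitative bound for general $m$, I would feed in a trial function of product type on the simplex, $F(t_1,\dots,t_k)=\prod_{i=1}^k g(kt_i)$ for a suitable one-variable $g$ supported near $0$, and estimate $I_k$ and the $J_k^{(j)}$ directly; optimising $g$ gives $M_k\ge \log k-2\log\log k-2$ for $k$ large. Choosing $k$ large enough (of size $e^{4m+O(\log m)}$) then makes $M_k>4m$, and the gap $h_k-h_1$ is at most the diameter of the smallest admissible $k$-tuple, which by the prime number theorem one may take to be $O(k\log k)$ (e.g.\ the first $k$ primes exceeding $k$); tracking the dependence on $m$ yields $\liminf_n(p_{n+m}-p_n)\le Cm^3e^{4m+5}$. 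For the value $600$ I would instead take the explicit admissible $105$-tuple inside $[0,600]$ furnished by Engelsma's theorem above, and restrict $F$ to the finite-dimensional space of symmetric polynomials in the power sums $P_1=\sum t_i$ and $P_2=\sum t_i^2$ of bounded degree; on this subspace the ratio defining $M_{105}$ becomes a generalised Rayleigh quotient of two explicit symmetric matrices, and a finite computation exhibits $F$ with $\sum_j J_{105}^{(j)}(F)/I_{105}(F)>4$, i.e.\ $M_{105}>4$. Combined with the reduction above this forces two of $n+h_1,\dots,n+h_{105}\in[n,n+600]$ to be prime infinitely often, so $\liminf_n(p_{n+1}-p_n)\le 600$.

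The main obstacle is the asymptotic evaluation of $S_2$: one must commute the divisor sums past the characteristic function of the primes, reduce to counting primes in residue classes, and absorb the fluctuation uniformly over a wide range of moduli --- this is exactly where the level-of-distribution input (Bombieri--Vinogradov, or any $\theta<\tfrac12$) is indispensable, and it is what pins the threshold at $4m$ rather than something smaller. A secondary but genuinely new difficulty is the lower bound $M_k\gtrsim\log k$: the one-variable GPY-type weight keeps $M_k$ bounded, and it is precisely the extra freedom in choosing the multivariate $F$ that lets $M_k$ grow without bound, which is the crux turning ``small gaps'' into ``bounded gaps.'' One should also keep in mind the mild subtlety that only $\theta<\tfrac12$ is known, so every inequality above must remain strict and the chosen $F$ must beat $4m$ by a positive margin.
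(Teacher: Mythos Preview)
Your proposal is correct and follows essentially the same route as the paper's sketch: reduce to the criterion $M_k>2m/\theta$ via the multidimensional weights, invoke Bombieri--Vinogradov for $\theta<\tfrac12$, bound $M_k$ below by $\log k-2\log\log k-2$ using the product-form trial function $\prod_i g(kt_i)$ for the general-$m$ part, and for the bound $600$ use the symmetric-polynomial Rayleigh-quotient computation to obtain $M_{105}>4$ together with Engelsma's admissible $105$-tuple in $[0,600]$. Your write-up is in fact more detailed (the $W$-trick, the explicit asymptotics for $S_1,S_2$, and the diameter bound $O(k\log k)$ for the smallest admissible $k$-tuple), but there is no substantive divergence from the paper's approach.
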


\begin{remark}
In Zhang's result, we are able to  see that the main breakthrough is about the primes in Arithmetic Progression. However, in Maynard results, what we would do is to modify GPY sieve. It turn out that this would cause the optimization problem and some combinatorial problem. 
\end{remark}

First, we consider the same function as we did above, let $$S=\frac{\sum_{N\leq n<2N} {1_{n+h_i prime}*f(n)}}{\sum_{N\leq n<2N} {f(n)}}$$

\begin{remark}
As we discuss above, we need $S>1$ for bound gaps. Also, if $S>m$ for all large $N$, then $\liminf {p_{n+m} - p_n} < \infty$
\end{remark}

As we said above, we used to define $$f(n)=(\sum_{\substack{d\leq R\\ d|(n+h_1)(n+h_2) \mathellipsis (n+h_k)}} {\lambda_d})^2$$ where $\lambda_d = \mu(d)*g(d)$ for $g(d) \approx (log\frac{R}{d})^{k+l}$

Maynard provides us a new choice of $f(n)$ as following 
$$f(n)=\Bigg(\sum_{\substack{d_1, d_2, \mathellipsis d_k\\ d_1*\mathellipsis*d_k < R\\ d_i|(n+h_1)(n+h_2) \mathellipsis (n+h_k)}} {\lambda_{d_1,d_2\mathellipsis,d_k}}\Bigg)^2$$
where $\lambda_{d_1,d_2\mathellipsis,d_k} = \mu(\prod_{i=1}^k d_i)*h(d_1,d_2, \mathellipsis, d_k)$. The function $h$ is defined in term of a smooth function F that we are free to choose. One advantage of such choice of weight is that our weight depends on divisor of each of $n+h_i$.\\

Recall the theorem above has two part, they deal with $p_{n+m}$ and $p_{n+1}$ respectively. It turn out function $F$ can be slightly different for two part. Thus, before we find the function $F$, we provide some definition and theorem. Let $F:[0,1]^k \to \mathit{R}$ denote a nonzero square-integrable function with support in $R_k = \{(x_1,x_2, \mathellipsis, x_k) \in [0,1]^k : \sum_{i} {x_i} \leq 1. \}$ we define $$I(F) = \int_{[0,1]^k} {F(t_1, t_2, \mathellipsis, t_k)^2 dt_1dt_2\mathellipsis dt_k}$$ $$J(F) = \sum_{i=1}^k {\int_{[0,1]^{k-1}} {(\int_{[0,1]}{F(t_1, t_2, \mathellipsis, t_k)dt_i)^2}}dt_1\mathellipsis dt_{i-1}dt_{i+1} \mathellipsis dt_k}$$  $$M_k = \sup_F \frac{J(F)}{I(F)}$$ 

\begin{theorem}[Main Theorem in \cite{maynard2015small}]
\label{3.17}
For any $0<\theta <1$, if $EH[\theta]$ and $M_k > \frac{2m}{\theta}$, then $DHL[k,m+1]$. Equivalently, let the primes have level of distribution $\theta$. If $M_k > \frac{2m}{\theta}$, then there are infinitely many integer $n$ such that at least $m+1$ of the $n+h_i$ are primes. 
\end{theorem}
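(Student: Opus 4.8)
The plan is to run the multidimensional GPY sieve of \cite{maynard2015small}: reduce $DHL[k,m+1]$ to the positivity of a single weighted sum, and then evaluate that sum by diagonalizing the Maynard bilinear form and feeding in the distribution hypothesis. Fix an admissible $k$-tuple $H=\{h_1,\dots,h_k\}$ and a large $N$. First I would apply the $W$-trick: put $W=\prod_{p\le D_0}p$ with $D_0\to\infty$ very slowly (e.g.\ $D_0=\log\log\log N$), and pick a residue $v_0$ modulo $W$ with $(v_0+h_i,W)=1$ for every $i$, which is possible precisely because $H$ is admissible. With the nonnegative weight $f(n)$ from the theorem statement I would form
$$S\;=\;\sum_{\substack{N\le n<2N\\ n\equiv v_0\ (W)}}\Bigl(\sum_{i=1}^k \mathbf{1}_{n+h_i\ \mathrm{prime}}-m\Bigr)f(n)\;=\;S_2-mS_1,$$
where $S_1=\sum_n f(n)$ and $S_2=\sum_{i=1}^k\sum_n \mathbf{1}_{n+h_i\ \mathrm{prime}}\,f(n)$. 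If $S>0$ for arbitrarily large $N$, some $n$ in the range satisfies $\sum_i \mathbf{1}_{n+h_i\ \mathrm{prime}}\ge m+1$, and letting $N\to\infty$ gives $DHL[k,m+1]$; so the whole statement reduces to proving $S>0$ under $EH[\theta]$ and $M_k>2m/\theta$.

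Next I would fix a nonzero square-integrable $F:[0,1]^k\to\mathbb{R}$ supported on $R_k$, set the truncation level $R=N^{\theta/2-\delta}$ for a small $\delta>0$, and take the coefficients in the factored form
$$\lambda_{d_1,\dots,d_k}\;=\;\mu\Bigl(\textstyle\prod_i d_i\Bigr)\Bigl(\textstyle\prod_i d_i\Bigr)\sum_{\substack{r_1,\dots,r_k\\ d_i\mid r_i\ (\forall i)}}\frac{\mu(\prod_i r_i)^2}{\prod_i \phi(r_i)}\,F\!\Bigl(\tfrac{\log r_1}{\log R},\dots,\tfrac{\log r_k}{\log R}\Bigr),$$
supported on squarefree $\prod_i d_i<R$ coprime to $W$. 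Expanding the square in $f(n)$ and interchanging summations expresses $S_1$ as a double sum over $(\mathbf d,\mathbf e)$ of $\lambda_{\mathbf d}\lambda_{\mathbf e}$ times the number of $n\equiv v_0\ (W)$ in $[N,2N)$ with $[d_i,e_i]\mid n+h_i$ for all $i$; as the moduli are squarefree, coprime to $W$ and pairwise coprime, that inner count is $N/(W\prod_i[d_i,e_i])+O(1)$. Changing variables to the $r_i$ diagonalizes the quadratic form, and a routine evaluation (iterating $\sum_{d\le z,\,(d,W)=1}\mu^2(d)/\phi(d)=(1+o(1))\tfrac{\phi(W)}{W}\log z$, or a Mellin computation) yields
$$S_1\;=\;\bigl(1+o(1)\bigr)\,\frac{\phi(W)^k}{W^{k+1}}\,N\,(\log R)^k\,I(F).$$

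For $S_2$ the inner sum over $n$ becomes, for each $i$, a count of primes $p=n+h_i\le 2N$ in an arithmetic progression to modulus $q=W\prod_i[d_i,e_i]$ (with $d_i=e_i=1$ forced, since a prime $>R$ has no small divisor), which I would replace by its main term $(\pi(2N)-\pi(N))/\phi(q)$. This replacement is the one place the arithmetic enters, and it is the step I expect to be the main obstacle: one must show that the total error, weighted by $\lambda_{\mathbf d}\lambda_{\mathbf e}$ and summed over $(\mathbf d,\mathbf e)$, is $o\bigl(N(\log R)^{k+1}/\log N\bigr)$. Since those coefficients are $\ll(\log R)^{O(1)}$ and supported on only $N^{o(1)}$ tuples, while $q$ ranges merely up to $WR^2=WN^{\theta-2\delta}<N^{\theta}$, the hypothesis $EH[\theta]$ — in effect $\sum_{q\le N^{\theta}}\sup_{(a,q)=1}\bigl|\pi(2N;q,a)-(\pi(2N)-\pi(N))/\phi(q)\bigr|\ll_A N(\log N)^{-A}$ — absorbs it exactly, and this is what dictates the choice $R\le N^{\theta/2-\delta}$. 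The same diagonalization then gives
$$S_2\;=\;\bigl(1+o(1)\bigr)\,\frac{\phi(W)^k}{W^{k+1}}\,N\,\frac{(\log R)^{k+1}}{\log N}\,J(F),$$
where $J(F)=\sum_{i=1}^k J^{(i)}(F)$ is exactly the sum defining $J(F)$ in the statement; the extra factor $\log R/\log N$ records the density $\sim 1/\log N$ of primes, and the drop from the full integral $I(F)$ to the one-variable partial integrals in $J(F)$ comes from the divisor $d_i$ attached to the prime variable $n+h_i$ being forced to $1$.

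Finally I would combine the two asymptotics into
$$S\;=\;\frac{\phi(W)^k\,N\,(\log R)^k}{W^{k+1}}\Bigl(\tfrac{\log R}{\log N}\,J(F)-m\,I(F)+o(1)\Bigr),$$
and since $\log R=(\theta/2-\delta)\log N$ this is positive for all large $N$ as soon as $\bigl(\tfrac{\theta}{2}-\delta\bigr)J(F)>m\,I(F)$, i.e.\ $J(F)/I(F)>2m/(\theta-2\delta)$. If $M_k=\sup_F J(F)/I(F)>2m/\theta$, I would pick an admissible $F$ with $J(F)/I(F)>2m/\theta$ and then choose $\delta$ small enough that $2m/(\theta-2\delta)$ stays below that value; this makes $S>0$ for all large $N$, hence there are infinitely many $n$ with at least $m+1$ of the $n+h_i$ prime, i.e.\ $DHL[k,m+1]$. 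Apart from the $S_2$ error bound, the remaining work is the lengthy but routine bookkeeping that diagonalizes the bilinear forms and matches the limiting constants to $I(F)$ and $J(F)$, using $\mathrm{supp}\,F\subseteq R_k$ to keep $\prod_i d_i<R$ throughout.
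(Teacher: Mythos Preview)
Your proposal is a correct outline of Maynard's original proof of the implication stated in the theorem: you reduce $DHL[k,m+1]$ to $S_2-mS_1>0$, diagonalize the bilinear forms to obtain the asymptotics $S_1\sim cN(\log R)^kI(F)$ and $S_2\sim cN(\log R)^{k+1}J(F)/\log N$, invoke $EH[\theta]$ to control the error in $S_2$ (which is exactly what forces $R=N^{\theta/2-\delta}$), and conclude positivity once $J(F)/I(F)>2m/\theta$. This is the standard argument and there is no gap in the plan.

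However, what the paper labels ``Proof Sketches of Theorem~\ref{3.17}'' is \emph{not} a proof of this implication at all. The paper takes the implication for granted (note the sentence immediately preceding the proof: ``having theorem above, our problem is to prove $M_k>2m/\theta$'') and instead sketches the complementary half of Maynard's work, namely the lower bounds on $M_k$: choosing $F$ as a product $\prod g(kt_i)$, optimizing $g$ via calculus of variations to get $M_k>\log k-2\log\log k-2$ for large $k$, and for small $k$ taking $F$ to be a symmetric polynomial and reducing to an eigenvalue problem to get $M_{105}>4$. Combined with Bombieri--Vinogradov ($\theta<1/2$) and Engelsma's admissible $105$-tuple in $[0,600]$, that yields the headline numbers $600$ and $Cm^3e^{4m}$.

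So you and the paper are proving disjoint pieces: you establish the sieve-theoretic implication $EH[\theta]\wedge(M_k>2m/\theta)\Rightarrow DHL[k,m+1]$, while the paper's sketch establishes the variational input $M_k>2m/\theta$ for suitable $k$. Both are needed for the final prime-gap conclusions, and your write-up is the one that actually matches the theorem as stated.
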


\begin{remark}
As we would see in the following passage, the final result about small gaps is based on Bombieri-Vinogradov Theorem that $\theta < \frac{1}{2}$ unconditionally.However, from the theorem above, we are able to see if we assume Elliott-Halberstam conjecture ($\theta < 1$), then we are able to see a better result i.e. $\liminf_{n} {p_{n+1} - p_n} \leq 12$
\end{remark}

So, having theorem above, our problem is to prove $M_k > \frac{2m}{\theta}$, which is question about the lower bound of $M_k$.

\begin{proof}[Proof Sketches of Theorem \ref{3.17}]
Let $$F(t_1,t_2,\mathellipsis,t_k) = \prod_{i=1}^k g(kt_i)$$ if $(t_1,t_2,\mathellipsis,t_k) \in R_k$ and $F(t_1,t_2,\mathellipsis,t_k)=0$ otherwise for some function g. 

If the center of mass of $g^2$ satisfy $$\mu = \frac{\int_{0}^{\infty} {tg(t)^2}dt}{\int_{0}^{\infty} {g(t)^2}dt} <1$$ then by the concentration of measure we expect the restriction on support of $F$ to be negligible. 

If g is supported on $[0,T]$, we find that $$M_k \geq \frac{\int_{0}^{T} {tg(t)^2}dt}{\int_{0}^{T} {g(t)^2}dt} * \Bigg(1-\frac{T}{k(1-\frac{T}{k} - \mu)^2}\Bigg)$$

For fixed $\mu$ and $T$, we optimize over all such $g$ by Calculus of Variation. The optimal $g$ would be $$g(t) = \frac{1}{1+At}$$ if $t \in [0,T]$

With the g above, we find that a suitable choice of $A$, $T$ gives $$M_k > \log\ k -2\log\log\ k -2$$ if $k$ is large enough.

By Bombieri-Vinogradov Theorem, we can take any $\theta < \frac{1}{2}$ unconditional. With the theorem above and some effort, we are able to obtain $$\liminf_{n} {p_{n+m}-p_{n}} \leq Cm^3e^{4m}$$ This complete the first part of theorem. When k is relatively small, we are bale to use a different argument to bound $M_k$ involving symmetric polynomials.\\

Let symmetric polynomial $P(t_1,t_2,\mathellipsis, t_k)$ define as following $$P(t_1,t_2,\mathellipsis, t_k) = \sum_{a+2b \leq d} {c_{a,b}(1-P_1)^a{P_2}^b}$$ where $P_1 = \sum_{i}{t_i}$ and $P_2 = \sum_{i} {t_i^2}$ i.e. symmetric polynomial of degree at most $d$. Now let $$F = P(t_1,t_2,\mathellipsis, t_k)$$ if $(t_1,t_2,\mathellipsis, t_k) \in R_k$ and $F=0$ otherwise. This form of $F$ is relatively simple, so we are able to compute the integral we define above, $I(F)$ and $J(F)$. It turn out that $\frac{J(F)}{I(F)}$ become a ratio of quadratic forms. i.e. we are able to obtain $$M_k \geq \sup_{a \in \mathit{R}^d} {\frac{a^{T}A_2a}{a^{T}A_1a}}$$ for positive definite symmetric rational matrices $A_1, A_2$. From optimization perspective, we are able to obtain that right hand side obtain its maximal value when $a$ equal the largest eigenvalue of ${A_1}^{-1}A_2$.Thus, range over all symmetric polynomial of degree at most $d$, we are able to obtain an efficient bound. For example, if $k=105$; we have $M_k > 4$ and if $k = 5$, we have $M_k > 2$.Note as a special case of the theorem we state before, if $M_k > \frac{2}{\theta}$, then there are infinitely many integer $n$ such that at least $2$ of the $n+h_i$ are primes. Combined with Engelsma and Bombieri-Vinogradov Theorem, we are able to conclude $\liminf_{n} {p_{n+1} - p_n} \leq 600$.
\end{proof}

\section{Discussion}
In this section, we would like to present some though for possible future works. 
\begin{enumerate}
\item
Notice in the beginning of discussion of small gaps between primes, we consider the indicator function and pick $f(n)$ to satisfy $$\sum_{X<n<2X} {f(n)*\Bigg(\sum_{i=1}^k {1_{n+h_i \ prime}}-1\Bigg)} > 0$$ which basically is a weighted sum. The things we really want to show is $\sum_{i=1}^k {1_{n+h_i \ prime}}-1 > 0 $ for some n. The most natural way of thinking would be improve our choice of $f(n)$ or our technical of estimating sum. If fact, both of them can be quite difficult like \cite{zhang2014bounded} and \cite{maynard2015small}. However, in order to obtain $\sum_{i=1}^k {1_{n+h_i \ prime}}-1 > 0 $, it is not necessary for us to use sum or weighted sum. Instead, we can use following\\

consider $$\prod_{X<n<2X} {\Bigg(\sum_{i=1}^k {1_{n+h_i \ prime}}-\frac{3}{2}\Bigg)}$$ Since $\sum_{i=1}^k {1_{n+h_i \ prime}}$ are integer, subtract $\frac{1}{2}$ does not change the sign. Also, for the same reason, each term in the product would not be zero. So, if we can prove the product are positive, there are two cases to consider: 
\begin{enumerate}
    \item if one of ${\sum_{i=1}^k {1_{n+h_i \ prime}}-\frac{3}{2}} > 0$, then we are done for the same reason we said before.
    \item if all of them are negative and due to our $X$, there are even number of negative term. In this case, we can product over either $n \in [X, 2X)$ or $n \in (X, 2X)$. there must be one them that only contain odd number of choice of n. Thus, we do not need to consider this cases and suffice to show the positivity of product. 
\end{enumerate}

More generally, as we did in the sum, we can add some 'weight' $f(n)>0$ to the product to make the calculation easier. $$\prod_{X<n<2X} {f(n)*\Bigg(\sum_{i=1}^k {1_{n+h_i \ prime}}-\frac{3}{2}\Bigg)}$$

Moreover, there are nothing special about product or sum. Let F be a function from $\mathbb{R}^X$ to $\mathbb{R}$ satisfying: if $F(x_1, x_2, \mathellipsis x_X) > 0$, then $x_i > 0$ for some $i \leq X$. For such F consider $$F\Bigg(\sum_{i=1}^k {1_{X+1+h_i \ prime}}-1, \mathellipsis, \sum_{i=1}^k {1_{2X+h_i \ prime}}-1\Bigg)$$ if we can show the expression is positive for some $F$ satisfying the condition we require above, we are done. In this case, we have more flexibility in choice of F since we input only one constrain to our function $F$. As a special cases, weighted sum clearly satisfy the condition we impose on $F$. Thus, weighted sum would be a special cases of $F$. 

\item
In small gaps between primes, Zhang and Maynard focus on different aspects and did the improvement independently. Zhang improves the result in arithmetic progression, whereas Maynard modify GPY sieve. Zhang's result allow us to have a better bound for the estimation and Maynard's method allows us to have a more subtle and stronger results. Thus, is it possible that combine the two results together with some necessary modification to have a better conclusion? For example, using Maynard's sieve, we somehow modify Zhang's results in Arithmetic Progression to have a better bound. 

\item
In the studying of primes gaps, the question we are dealing with is about $\limsup$ or $\liminf$, which basically are question asking infinite often. However, this kind of question has been studied for a long time in Analysis or Probability. Having this in mind, is it possible for us to model the distribution or primes gaps for all primes, so that the results from probability might give us some new way of thinking. Moreover, if we convert problem in prime gaps to probability, we are able to use theorem like Borel-Cantalli lemma, Strong/Weak Law of Large Number or Central Limit theorem. In fact, in the proof of Erdos-Kac theorem we state in the beginning, Central Limit theorem play an important role. The classic version of those probability theorem above require the random variable to be iid. However, there indeed some stronger version that only require independent, which are much easier to obtain by only considering a sub-sequence if necessary. Since we are studying infinite often question, prove the statement on subsequence would gives us the result.\\

For example, we could set random variable to be the number of primes smaller that $n$ or number of primes divide $n$. If we want to study $p_{n+m}-p_n$, we consider the number of k in admissible set. We might set event $$A_k = \{n+\{h_1, h_2, \mathellipsis h_k\}\ contain\ m\ primes\ for\ infinite\ many\ n\}$$ From construction of $A_k$ or Kolmogorov 0-1 law, we are able to see $\mathbb{P}(A_k)$ = 0 or 1. So we can sum up all $k \leq N$, $$\sum_{k\leq N} {\mathbb{P}(A_k)}$$ once we have a result of positive number, the $N$ would be the number we are looking for.

\item
In probability, there are a noted inequality called Azuma's inequality (See \cite{vershynin2018high} for example). It states that for random variable $X_1, X_2, \mathellipsis X_n$ and function $f$, suppose $\mathbb{E}(f(X_1,\mathellipsis X_n)$ exist and $$|f(X_1,\mathellipsis X_n) - f(X_1,\mathellipsis, t_i, \mathellipsis X_n)| \leq a_i$$ for any $t_i$ take value in the range of random variable. Then we have $$\mathbb{P}(f(X_1,\mathellipsis X_n) - \mathbb{E}(f(X_1,\mathellipsis X_n) \geq t) \leq e^{\frac{-t^2}{2\sum_{i}{a_i^2}}}$$

One of the application of Azuma's inequality is Erdo-Renyi graph. In Erdo-Renyi graph, one considers graph $G(n,q)$ with n vertices and each pair of vertices are represent by iid. Bernoulli(p), $e_{ij}$, where $e_{ij} = 1$ if edge $ij$ are connected in the graph and $e_{ij}=0$ otherwise. we send a coloring to the graph $G(n,p)$ and the graph is called admissible if 'color of i $\neq$ color of j if $e_{ij} = 1$'. The question people interested in the chromatic number of $G(n,p)$ denoted as $\chi(G(n,p))$. One important fact is that the expectation of $\chi(G(n,p))$ is in the similar form as Prime Number theorem. i.e. $$\mathbb{E}(\chi(G(n,p))) \sim \frac{n}{\log n}$$ In probability, people are trying to improve the expectation above by substitute the $\log n$ with expression like $\log\log n$ and etc. However, all these have lots of similarity with primes gaps and in fact, we have already lots of results looking like $\log\log n$. So, if we are able to find some connection between chromatic number of Erdo-Renyi graph and primes gaps, we are able to witness a new way of thinking. Even though, at this time, Erdo-Renyi graph might not provide some stronger observation in prime gaps, like what probability sometimes did to Analysis, the intuition and simpler cases are very helpful and enlightening. 

\end{enumerate}

\section{Symmetry Between Small and Large Gaps}
In this section, we mainly discuss Maynard's remarkable work which allow us to talk about large prime gaps using progress on small prime gaps. This method is quite different from the previous ones.

In $[1,X]$ for large X, we use $G(X)$ to denote the largest prime gaps $p_{n+1}-p_n$ in $[1,X]$. We would like to deal with the problem that how does $G(X)$ grow with respect to X. Finding large prime gaps are basically the same thing as finding large consecutive string of composite numbers. Having this in mind, I have following proposition:

Similar to small gaps between primes, by Prime Number Theorem and pigeon hole principle, I have
\begin{theorem}
\label{4.1}
$p_{n+1}-p_n \gg log(p_n) \ i.o.$
\end{theorem}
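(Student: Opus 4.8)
The plan is to run the pigeonhole argument of Proposition~\ref{3.1} in reverse. The Prime Number Theorem gives $|\{n : p_n \le X\}| = (1+o(1))X/\log X$, so the \emph{average} gap between consecutive primes below $X$ is asymptotically $\log X$; hence there must be infinitely many gaps of size at least a fixed positive multiple of this average, which is exactly what ``$p_{n+1}-p_n \gg \log p_n$ infinitely often'' asserts.

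Concretely, I would argue by contradiction. Suppose the conclusion fails, so $p_{n+1}-p_n = o(\log p_n)$. Then for a fixed $\epsilon \in (0,1)$ there is $n_0$ with $p_{n+1}-p_n < \epsilon\log p_n \le \epsilon\log X$ for all $n \ge n_0$ and all $X \ge p_n$. Fix a large $X$. Climbing from $p_{n_0}$ up to (nearly) $X$ in steps each shorter than $\epsilon\log X$ takes more than $(X - p_{n_0})/(\epsilon\log X) - O(1)$ steps, so
\[
\pi(X) \;\ge\; \frac{X - p_{n_0}}{\epsilon\log X} - O(1) \;=\; \big(1+o(1)\big)\frac{X}{\epsilon\log X} \qquad (X\to\infty).
\]
Comparing with the Prime Number Theorem $\pi(X) = (1+o(1))X/\log X$ forces $1 \ge 1/\epsilon + o(1)$, contradicting $\epsilon<1$. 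Hence $\limsup_n (p_{n+1}-p_n)/\log p_n \ge 1$, i.e. $p_{n+1}-p_n \gg \log p_n$ infinitely often.

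Closer to the probabilistic spirit of the paper, the same conclusion comes from localizing to $[X,2X]$: by the Prime Number Theorem this interval contains $(1+o(1))X/\log X$ primes, with primes within $o(X)$ of each endpoint, so the gaps between consecutive primes in it have average $(1+o(1))\log X$, and therefore the maximum such gap is $\ge (1-o(1))\log X$; since the prime realizing it is at most $2X$, that gap is $\ge (1-o(1))\log p_n$, and letting $X$ run to infinity along any sequence yields infinitely many such indices $n$ (the witnessing index satisfies $p_n \ge X \to \infty$).

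The only substantive input is the Prime Number Theorem, exactly as in Proposition~\ref{3.1}; everything else is bookkeeping and I do not expect a genuine obstacle. The points needing a little care are: absorbing the bounded contribution of the primes below $p_{n_0}$ and the $O(\log X)$ endpoint correction near $X$ into the error terms; keeping the $o(1)$'s honest so the strict inequality survives passage to the limit; and, for the ``infinitely often'' clause, noting that $\limsup_n (p_{n+1}-p_n)/\log p_n \ge 1$ already supplies infinitely many witnessing indices (in the localized version, that distinct large $X$ give distinct $n$ because the gaps produced tend to infinity).
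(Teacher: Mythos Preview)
Your argument is correct: the Prime Number Theorem forces the average gap in $[X,2X]$ to be $(1+o(1))\log X$, so the maximum gap there is at least that large, and this happens for every large $X$. The paper in fact alludes to exactly this pigeonhole route in the sentence introducing the theorem.

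However, the proof the paper actually writes out is different. It uses the primorial $P(n)=\prod_{p\le n}p$: the integers $P(n)+2,\dots,P(n)+n$ are all composite, giving a gap of length at least $n-1$ near $P(n)$; since $\log P(n)=\sum_{p\le n}\log p\asymp n$, this gap is $\gg \log p_m$ for the relevant index $m$. Your approach is non-constructive and extracts the result from the average, with the clean constant $\limsup_n (p_{n+1}-p_n)/\log p_n\ge 1$. The paper's primorial construction is explicit and, more to the point of that section, it is the seed that generalizes: by choosing residue classes more cleverly than ``$0\pmod p$'' one covers longer intervals with the same primes, leading to the Erd\H{o}s--Rankin improvements and ultimately to the Ford--Green--Konyagin--Maynard--Tao results the section is building towards. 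So both proofs are fine for the bare statement, but the paper's choice is the one that opens the door to the subsequent discussion.
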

\begin{remark}
As we did in previous section, there is also an alternate proof of the proposition above using primorial and eventually, it can be further generalized. 
\end{remark}

\begin{proof}[Probabilistic proof of Theorem \label{4.1}]
consider the primorial $P(n)$, we obtain $$P(n)+2, P(n)+3, \mathellipsis P(n)+n$$ are consecutive string of composite numbers. Thus I conclude $p_{n+1}-p_{n} \gg log\ p_n$
\end{proof}

For interval $\{2,3,\mathellipsis,n\}$ we are able to cover all element by residue class of all primes less than $n$. More specially, first we delete all number that $0 (mod\ 2)$, then delete all number that $0 (mod\ 3)$ until all number that $0 (mod\ p)$. Generalizing the method above, for interval $\{a, a+1, \mathellipsis, b\}$, suppose the interval can be covered by primes of residue class $$c_2 (mod\ 2), c_3 (mod\ 3), \mathellipsis c_p (mod\ p)$$
for $p\leq n$. Using Chinese Remainder theorem, we can find a $y$ such that 
$$y = -c_2 (mod\ 2), y=-c_3 (mod\ 3), \mathellipsis, y=-c_p (mod\ p).$$ 
Then $y + \{a, a+1, \mathellipsis, b\}$ would a string of composite numbers. In particular, every number is divisible by primes that are less than $n$.

Base on the same method as above, use prime $p \leq n $, we can cover $$\{1,2,3,\mathellipsis, y\}$$ where $y = n*(\frac{\log n  \log\log\log n}{\log n})$. In order to show this, we have find some smart way to pick our $c_i$ residue class.

The essential 'truncation' in the choice of $c_i$ is the number $\frac{n}{2}$. Roughly speaking, for $p\leq \frac{n}{2}$, pick some congruence class that cover all element except primes between $\frac{n}{2}$ and $y$. For $p\geq \frac{n}{2}$, we try to cover remaining prime. Thus, the question turn out to be given $\frac{n}{2} \leq p \leq n$, how to find $c_p (mod\ p)$ that would cover at many as primes in $[1,y]$ as possible?

Originally, people focus on making a chain in the form of $\{c_p, c_p + p, c_p + 2p, \mathellipsis\}$ to be all primes. However, in order to solve this question, one do not really need all of them to be primes. While Maynard works on small gaps between primes, he find a way to produce many primes in $\{n+h_1, n+h_2, \mathellipsis, n+h_k\}$. Thus, the question above become a special case of the result in small gaps. This would finally be able to prove the our claim.

\newpage
\bibliographystyle{ims}
\bibliography{references}

\end{document}